%
%
%
\documentclass{conm-p-l}
\usepackage{amssymb, euscript}
\newtheorem{theorem}{Theorem}[section]
\newtheorem{lemma}[theorem]{Lemma}
\newtheorem{corollary}[theorem]{Corollary}
\theoremstyle{definition}
\newtheorem{definition}[theorem]{Definition}

\theoremstyle{remark}

\numberwithin{equation}{section}



\begin{document}

\title[A Discrete Helgason-Fourier transform on
symmetric spaces ]{A Discrete Helgason-Fourier transform for
Sobolev and Besov  functions on noncompact symmetric spaces}

\author{Isaac Pesenson}
\address{Department of Mathematics, Temple University,
Philadelphia, PA 19122} \email{pesenson@temple.edu}
\subjclass{Primary 43A85; Secondary 42C99}
\date{January 1, 1994 and, in revised form, June 22, 1994.}

\keywords{Non-compact symmetric spaces, Helgason-Fourier
transform,
  Laplace operator, Interpolation and Approximation spaces, Besov norms.}
\begin{abstract}

Let $f$ be a Paley-Wiener function in the space  $L_{2}(X)$, where
$X$ is a symmetric space of noncompact type. It is shown that by
using the  values of $f$ on a sufficiently dense and separated set
of points of $X$ one can give an exact formula for the
Helgason-Fourier transform of $f$. In order to find a discrete
approximation to the Helgason-Fourier transform of a function from
a Besov space on $X$  we develop an approximation theory by
Paley-Wiener functions in $L_{2}(X)$.

\end{abstract}
\maketitle

\section{Introduction and main results}
\noindent
Let $f$ be a smooth function in the space  $L_{2}(X, dx)$, where
$X$ is a symmetric space of noncompact type and $dx$ is an
invariant measure. The notation $\widehat{f}$ will be used for the
Helgason-Fourier transform of $f$. The Helgason-Fourier transform
$\widehat{f}$ can be treated as a function on
$\mathbb{R}^{n}\times \mathcal{B}$ where $\mathcal{B}$ is a
certain compact homogeneous manifold and $n$ is the rank of $X$.
Moreover, $\widehat{f}$ belongs to the space
$$
L_{2}(\mathcal{R},d\mu)\equiv L_{2}\left(\mathbb{R}^{n}\times
\mathcal{B};|c(\lambda)|^{-2}d\lambda db \right),
$$
where $c(\lambda)$ is the Harish-Chandra's function, $d\lambda$ is
the Euclidean measure and $db$ is the normalized invariant measure
on $\mathcal{B}$, $\mathcal{R}=\mathbb{R}^{n}\times \mathcal{B},
d\mu=|c(\lambda)|^{-2}d\lambda db$. The notation
$\Pi_{\omega}\subset \mathbb{R}^{n}\times \mathcal{B}$ will be
used for the set of all points $(\lambda, b)\in
\mathbb{R}^{n}\times \mathcal{B}, \lambda\in \mathbb{R}^{n}, b\in
\mathcal{B},$ for which
$\sqrt{\left<\lambda,\lambda\right>}<\omega,$ where $\left<\cdot,
\cdot\right>$ is the Killing form.

The  Paley-Wiener space $PW_{\omega}(X), \omega>0,$ is defined as
the set of all functions in  $L_{2}(X)$ whose Helgason-Fourier
transform has support in $\Pi_{\omega}$ and belongs to the space
$$
\Lambda_{\omega}=L_{2}\left(\Pi_{\omega};|c(\lambda)|^{-2}d\lambda
db\right).
$$

It  is  shown that if  $f\in PW_{\omega}(X)$ is known only on a
sufficiently dense and separated set of points of $X$ then there
exists an exact formula for reconstruction the Helgason-Fourier
transform  $\widehat{f}$. In order to extend this result and to
find a discrete  approximation to the Helgason-Fourier transform
of a function from a Besov space on $X$ we develop an
approximation theory by Paley-Wiener functions in $L_{2}(X)$.

In the Section 2 we list some basic facts about harmonic Analysis
on symmetric spaces of noncompact type (see \cite{H1}-\cite{H3}).
One of the main results of the Section 3 is
 Corollary 3.1  which  says that for a fixed $\omega>0$ and a
sufficiently dense and separated  set of points
$Z_{\omega}=\{x_{j}\}, x_{j}\in X,$ there exists a set of
functions $\{\Theta_{x_{j}}\}, \Theta_{x_{j}}\in PW_{\omega}(X),$
such that for any $f\in PW_{\omega}(X)$ the following exact
formula holds
\begin{equation}
\widehat{f}= \sum_{x_{j}\in
Z_{\omega}}f(x_{j})\widehat{\Theta_{x_{j}}}.\label{DFT}
\end{equation}
This formula implies a quadrature rule which gives that for any
compact measurable set $U\subset X$

\begin{equation}
\int_{U}fdx= \sum_{x_{j}\in Z_{\omega}}f(x_{j})w_{j},\label{QR}
\end{equation}
for all $f\in PW_{\omega}(X)$. Here the  weights $w_{j}$ are given
by the formulas
$$
w_{j}=\int_{U}\Theta_{x_{j}}dx.
$$
In order to extend these results to non-Paley-Wiener functions we
consider  the following scheme. For a function $f\in L_{2}(X)$ we
consider its orthogonal projection on the space $PW_{\omega}(X)$
which is the function
 \begin{equation}
f_{\omega}=\mathcal{H}^{-1}\left(\chi_{\omega}\widehat{f}\right),
\end{equation}
where $\chi_{\omega}$ is the characteristic function of the set
$\Pi_{\omega}$ and $\mathcal{H}^{-1}$ is the inverse
Helgason-Fourier transform. It is clear that for a general
function $f\in L_{2}(X)$ the sum
$$
\widehat{f}_{\omega}=\sum_{x_{j}\in
Z_{\omega}}f_{\omega}(x_{j})\widehat{\Theta_{x_{j}}}
$$
gives just an approximation to the Helgason-Fourier $\widehat{f}$
of $f$  and a natural problem is to measure a degree of such
approximation.  If $Z_{\omega}=\{x_{j}\}, x_{j}\in X,$ is a set of
points for which the formula (\ref{DFT}) holds then by using the
Plancherel Theorem and the formula $\widehat{f_{\omega}}=
\chi_{\omega}\widehat{f}$ we obtain the following inequality
\begin{equation}
\Phi(f, Z_{\omega})\equiv\left\|\widehat{f}-\sum_{x_{j}\in
Z_{\omega}}f_{\omega}(x_{j})\widehat{\Theta_{x_{j}}}\right\|_{L_{2}(\mathcal{R},d\mu)}\leq
$$
$$\|\widehat{f}-\chi_{\omega}\widehat{f}\|_{L_{2}(\mathcal{R},d\mu)}+
\left\|\widehat{f_{\omega}}-\sum_{x_{j}\in
Z_{\omega}}f_{\omega}(x_{j})\widehat{\Theta_{x_{j}}}\right\|_{L_{2}(\mathcal{R},d\mu)}=
\|f-f_{\omega}\|_{L_{2}(X)}.\label{Error}
\end{equation}
This inequality shows that  the error of approximation of
$\widehat{f}$ for a general function $f\in L_{2}(X)$ by a sum
$\sum_{x_{j}\in
Z_{\omega}}f_{\omega}(x_{j})\widehat{\Theta_{x_{j}}}$ is
controlled by the best approximation $\mathcal{E}(f,\omega)$ of
$f\in L_{2}(X)$ by Paley-Wiener functions
\begin{equation}
\mathcal{E}(f,\omega)=\inf_{g\in
PW_{\omega}(X)}\|f-g\|_{L_{2}(X)}=\|f-f_{\omega}\|_{L_{2}(X)},
f\in L_{2}(X).\label{BA}
\end{equation}
The corresponding  approximation theory is developed in Sections 4
and 5. The main result of the Section 5 is Theorem 5.1 which
describes a rate of approximation of $f\in L_{2}(X)$ by
Paley-Wiener functions in terms of Besov spaces
$\mathbf{B}_{2,q}^{\alpha}(X), 1\leq q\leq \infty, \alpha>0 $. The
Besov spaces are described in terms of the one-parameter group
generated by the positive square root $\sqrt{-\Delta}$, where
$\Delta$ is the Laplace- Beltrami operator of an invariant metric
on $X$. We formulate here two particular cases of our main Theorem
5.2.

\begin{theorem} There exists a constant $C_{0}(X)$ and for every $\omega>0$
there exist a separated set of points $Z_{\omega}=\{x_{j}\}$ and a
set of functions $\left\{\Theta_{x_{j}}\right\}, \Theta_{x_{j}}\in
PW_{\omega}(X)$, as in (\ref{DFT}), such that
 for any $f$ in the Sobolev space
 $H^{\alpha}(X), \alpha> 0,$ the following  holds
\begin{equation}
\left(\int_{0}^{\infty}\left(\omega^{\alpha}\Phi(f,Z_{\omega})\right)^{2}
\frac{d\omega}{\omega}\right)^{1/2}\leq
C_{0}(X)\|f\|_{H^{\alpha}(X)},\label{Sob}
\end{equation}
where $\Phi(f,Z_{\omega})$ is defined in (\ref{Error}). Moreover,
if the following relation holds for an $0\leq \alpha\leq r, r\in
\mathbb{N},$
\begin{equation}
\left\|\left(I-e^{i
s\sqrt{-\Delta}}\right)^{r}f\right\|_{L_{2}(X)}= O(s^{\alpha}), s
\rightarrow 0,
\end{equation}
where $e^{i s\sqrt{-\Delta}}$ is the group generated by a positive
square root from the  operator $-\Delta$, then
\begin{equation}
\Phi(f; Z_{\omega}) =O(\omega^{-\alpha}), \omega\rightarrow
\infty.
\end{equation}
\end{theorem}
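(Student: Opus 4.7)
The plan is to reduce both statements of the theorem to the approximation-theoretic content of Section~5, using the basic error bound already established in (\ref{Error}). That bound gives, for any $f\in L_{2}(X)$ and any sampling set $Z_{\omega}$ on which the sampling formula (\ref{DFT}) holds, the pointwise majorization
\[
\Phi(f,Z_{\omega})\;\leq\;\mathcal{E}(f,\omega)\;=\;\|f-f_{\omega}\|_{L_{2}(X)}.
\]
Thus it is enough to control $\mathcal{E}(f,\omega)$ under the two hypotheses on $f$. For each $\omega>0$ I would invoke Corollary~3.1 to fix the separated set $Z_{\omega}$ and the reconstructing functions $\Theta_{x_{j}}\in PW_{\omega}(X)$; these are the same objects appearing inside $\Phi(f,Z_{\omega})$, so (\ref{Error}) is applicable uniformly in $\omega$. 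The constant $C_{0}(X)$ will ultimately absorb the structural constants governing the density of $Z_{\omega}$ and the Plancherel-type norm equivalences on $X$.

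For the first assertion, I would combine the above pointwise estimate with the $q=2$ instance of the main approximation result, Theorem~5.2, which characterizes the Besov space $\mathbf{B}_{2,2}^{\alpha}(X)$ by the finiteness of
\[
\left(\int_{0}^{\infty}\bigl(\omega^{\alpha}\mathcal{E}(f,\omega)\bigr)^{2}\frac{d\omega}{\omega}\right)^{1/2}.
\]
Since the Sobolev space $H^{\alpha}(X)$ is defined through $\sqrt{-\Delta}$, and the Besov scale in Section~5 is also described via the one-parameter group $e^{is\sqrt{-\Delta}}$, the identification $\mathbf{B}_{2,2}^{\alpha}(X)=H^{\alpha}(X)$ (with equivalent norms) will follow by a standard spectral/Littlewood--Paley argument applied to the Helgason--Fourier transform. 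Inserting the majorization $\Phi(f,Z_{\omega})\leq\mathcal{E}(f,\omega)$ then yields (\ref{Sob}).

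For the second assertion, the hypothesis $\|(I-e^{is\sqrt{-\Delta}})^{r}f\|_{L_{2}(X)}=O(s^{\alpha})$ with $0\leq\alpha\leq r$ is exactly the $r$-th order modulus-of-smoothness condition used in Section~5 to characterize membership in $\mathbf{B}_{2,\infty}^{\alpha}(X)$. Theorem~5.2 therefore gives the Jackson-type estimate $\mathcal{E}(f,\omega)=O(\omega^{-\alpha})$ as $\omega\to\infty$, and combined with (\ref{Error}) this produces the desired $\Phi(f;Z_{\omega})=O(\omega^{-\alpha})$.

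The main obstacle, and the reason both parts rest on Section~5, is the equivalence between the modulus-of-smoothness description of the Besov norms (through the unitary group $e^{is\sqrt{-\Delta}}$) and the best-approximation description by Paley--Wiener functions in $PW_{\omega}(X)$. This is a genuine Jackson--Bernstein pair of inequalities on a noncompact symmetric space, and it is precisely what Theorem~5.2 provides; once it is in hand, Theorem~1.1 follows by specializing to $q=2$ and $q=\infty$ and combining with the elementary bound (\ref{Error}).
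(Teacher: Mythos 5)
Your proposal is correct and follows essentially the same route as the paper: bound $\Phi(f,Z_{\omega})$ by the best approximation $\mathcal{E}(f,\omega)$ via (\ref{Error}), invoke the Jackson-type embedding of Section~5 (Theorem 5.1/5.2 and Corollary 5.1) for $q=2$ and $q=\infty$, and use the identification $\mathbf{B}_{2,2}^{\alpha}(X)=H^{\alpha}(X)$, which the paper obtains from interpolation of domains of the self-adjoint operator $(-\Delta)^{r/2}$ rather than a Littlewood--Paley argument, an inessential difference. The only caveat is that Theorem 5.1 provides a one-sided embedding, not a characterization of the Besov space, but you only use the inequality in the direction it actually holds, so the argument stands.
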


The results of the Section 5 are obtained as consequences of an
abstract  Direct Approximation Theorem 4.4 which is proved in the
Section 4. The Theorem 4.4 is an extension of the classical
results by Peetre and Sparr \cite{PS} about interpolation and
approximation spaces in abelian quasi-normed groups. The reason we
use the language of quasi-normed linear spaces is not because we
want to achieve a bigger generality but because this langauge
allows to treat simultaneously interpolation and approximation
spaces \cite{BL}, \cite{PS}. To be more specific: the two main
Theorems of this theory one of which gives a connection between
interpolation and approximation spaces and another one which is
known as the Power Theorem can be formulated only on the language
of quasi-normed linear spaces and not on the langauge of normed
linear spaces.

\section{Harmonic Analysis on symmetric spaces}
\noindent
A Riemannian symmetric space of the noncompact type is a
Riemannian manifold $X$ of the form $X=G/K$ where $G$ is a
connected semisimple Lie group with finite center and $K$ is a
maximal compact subgroup of $G$. The Lie algebras of the groups
$G$ and $K$ will be denoted respectively as $\textbf{g}$ and
$\textbf{k}$. The group $G$ acts on $X$ by left translations. If
$e$ is the identity in $G$  then the base point  $eK$ is denoted
by $0$. Every such $G$ admits Iwasawa decomposition $G=NAK$, where
the nilpotent Lie group $N$ and the abelian group $A$ have Lie
algebras $\textbf{n}$ and $\textbf{a}$ respectively. The dimension
of $\textbf{a}$ is known as the rank of $X$.
 The letter $M$ is usually used to denote the centralizer of $A$ in
$K$ and the letter $\mathcal{B}$ is commonly used for the
homogeneous space $K/M$.

Let $\textbf{a}^{*}$ be the real dual of $\textbf{a}$ and $W$ be
the Weyl's group. We denote by  $\Sigma$ will be the set of
restricted roots, and $\Sigma^{+}$ will be the set of all positive
roots. The notation $\textbf{a}^{+}$ has the following meaning
$$
\textbf{a}^{+}=\{h\in \textbf{a}|\alpha(h)>0, \alpha\in
\Sigma^{+}\}
$$
 and is known as positive Weyl's chamber. Let $\rho\in
 \textbf{a}^{*}$ is defined in a way that $2\rho$ is the sum of
 all  positive restricted roots. The Killing form $<,>$ on $\textbf{a}$
 defines a
 metric on $\textbf{a}$. By duality it defines a scalar product on
 $\textbf{a}^{*}$. We denote by  $\textbf{a}^{*}_{+}$ the set of
 $\lambda\in \textbf{a}^{*}$, whose dual belongs to
 $\textbf{a}^{+}$.
According to Iwasawa decomposition for every $g\in G$ there exists
a unique $A(g)\in \textbf{a}$ such that
$$g=n \exp A(g) k, k\in K, n\in N,
$$
 where $\exp :\textbf{a}\rightarrow A$ is the exponential map of
 the
 Lie algebra $\textbf{a}$ to Lie group $A$. On the direct product
 $X\times \mathcal{B}$ we introduce function with values in $\textbf{a}$
 using the formula
 \begin{equation}
 A(x,b)=A(u^{-1}g)
 \end{equation}
 where $x=gK, g\in G, b=uM, u\in K$.

For every $f\in C_{0}^{\infty}(X)$ the Helgason-Fourier transform
is defined by the formula
$$
\hat{f}(\lambda,b)=\int_{X}f(x)e^{(-i\lambda+\rho)(A(x,b))}dx,
$$
where $ \lambda\in \textbf{a}^{*}, b\in \mathcal{B}=K/M, $ and
$dx$ is a $G$-invariant measure on $X$. This integral can also be
expressed as an integral over the group $G$. Namely, if $b=uM,u\in
K$, then
\begin{equation}
\hat{f}(\lambda,b)=\int_{G}f(gK)e^{(-i\lambda+\rho)(A(u^{-1}g))}dg.
\end{equation}
The invariant measure on $X$ can be normalized so that the
following inversion formula holds for $f\in C_{0}^{\infty}(X)$
$$
f(x)=w^{-1}\int_{\textbf{a}^{*}\times
\mathcal{B}}\hat{f}(\lambda,b)e^{(i\lambda+\rho)(A(x,b))}|c(\lambda)|^{-2}d\lambda
db,
$$
where $w$ is the order of the Weyl's group and $c(\lambda)$ is the
Harish-Chandra's function, $d\lambda$ is the Euclidean measure on
$\textbf{a}^{*}$ and $db$ is the normalized $K$-invariant measure
on $\mathcal{B}$. This transform can be extended to an isomorphism
between the spaces $L_{2}(X,dx)$ and
$L_{2}(\textbf{a}^{*}_{+}\times \mathcal{B},
|c(\lambda)|^{-2}d\lambda db)$ and the Plancherel formula holds
true
$$
\|f\|=\left( \int_{\textbf{a}^{*}_{+}\times \mathcal{B}}|\hat{f}
(\lambda,b)|^{2}|c(\lambda)|^{-2}d\lambda db\right)^{1/2}.
$$

An analog of the Paley-Wiener Theorem is known which says in
particular that a Helgason-Fourier transform of a compactly
supported distribution is a function which is analytic in
$\lambda$.

Denote by $T_{x}(X)$ the tangent space of $X$ at a point $x\in X$
and let $ exp_ {x} $ :  $T_{x}(X)\rightarrow X$ be the exponential
geodesic map i.  e. $exp_{x}(u)=\gamma (1), u\in T_{x}(X)$ where
$\gamma (t)$ is the geodesic starting at $x$ with the initial
vector $u$ :  $\gamma (0)=x , \frac{d\gamma (0)}{dt}=u.$ In what
follows we assume that local coordinates  are defined by $exp$.

By using a uniformly bounded partition of unity
$\{\varphi_{\nu}\}$ subordinate to a cover of  $X$ of finite
multiplicity
$$
X=\bigcup_{\nu} \mathbf{B}(x_{\nu}, r),
$$
where $\mathbf{B}(x_{\nu}, r)$ is a metric ball at  $x_{\nu}\in X$
of radius $r$ we introduce Sobolev space $H^{\sigma}(X),
\sigma>0,$ as the completion of $C_{0}^{\infty}(X)$ with respect
to the norm
\begin{equation}
\|f\|_{H^{\sigma}(X)}=\left(\sum_{\nu}\|\varphi_{\nu}f\|^{2}
_{H^{\sigma}(\mathbf{B}(y_{\nu}, r))}\right) ^{1/2}.\label{SN}
\end{equation}

 The usual embedding Theorems for the spaces $H^{\sigma}(X)$
hold true.

The Killing form on $G$ induces an inner product on tangent spaces
of $X$. Using this inner product it is possible to construct
$G$-invariant Riemannian structure on $X$. The Laplace-Beltrami
operator of this Riemannian structure is denoted as $\Delta$.

It is known that the following formula holds
\begin{equation}
\widehat{\Delta
f}(\lambda,b)=-\left(\|\lambda\|^{2}+\|\rho\|^{2}\right)\hat{f}(\lambda,b),
f\in C_{0}^{\infty}(X),
\end{equation}
where
$\|\lambda\|^{2}=\left<\lambda,\lambda\right>,\|\rho\|^{2}=\left<\rho,\rho\right>,
\left<\cdot,\cdot\right>$ is the Killing form.

 It is also known that the operator $(-\Delta)$ is a self-adjoint positive
definite operator in the corresponding space $L_{2}(X,dx),$ where
$dx$ is the $G$-invariant measure. The regularity Theorem for the
Laplace-Beltrami operator $\Delta$ states that domains of the
powers
 $(-\Delta)^{\sigma/2}$ coincide with the Sobolev spaces
$H^{\sigma}(X)$ and  the norm (\ref{SN}) is equivalent to the
graph norm $\|f\|+\|(-\Delta)^{\sigma/2}f\|$ (see \cite{T3}, Sec.
7.4.5.) Moreover, since the operator $\Delta$ is invertible in
$L_{2}(X)$ the Sobolev norm is also equivalent to the norm
$\|(-\Delta)^{\sigma/2}f\|.$

\bigskip

\section{Paley-Wiener functions and their Discrete Helgason-Fourier transform}
\noindent
\begin{definition}
We will say that $f\in L_{2}(X,dx)$ belongs to the class
$PW_{\omega}(X)$ if its Helgason-Fourier transform has compact
support in the sense that $\hat{f}(\lambda,b)=0$ a. e. for
$\|\lambda\|>\omega$. Such functions will  be also called
$\omega$-band limited.
\end{definition}
Using the spectral resolution of identity $P_{\lambda}$ we define
the unitary group of operators by the formula
$$
e^{it\Delta}f=\int_{0}^{\infty}e^{it\tau}dP_{\tau}f, f\in
L_{2}(X), t\in \mathbb{R}.
$$

Let us introduce the operator
\begin{equation}
\textbf{R}_{\Delta}^{\sigma}f=\frac{\sigma}{\pi^{2}}\sum_{k\in\mathbb{Z}}\frac{(-1)^{k-1}}{(k-1/2)^{2}}
e^{i\left(\frac{\pi}{\sigma}(k-1/2)\right)\Delta}f, f\in L_{2}(X),
\sigma>0.\label{Riesz1}
\end{equation}
 Since  $\left\|e^{it\Delta}f\right\|=\|f\| $ and
\begin{equation}
\frac{\sigma}{\pi^{2}}\sum_{k\in\mathbb{Z}}\frac{1}{(k-1/2)^{2}}=\sigma,\label{id}
 \end{equation}
 the series in (\ref{Riesz1}) is convergent and it shows that
 $\textbf{R}_{\Delta}^{\sigma}$ is a bounded operator in $L_{2}(X)$
 with the norm $\sigma$:
\begin{equation}
 \|\textbf{R}_{\Delta}^{\sigma}f\|\leq \sigma\|f\|, f\in
L_{2}(X).\label{Riesznorm}
 \end{equation}

 The next theorem contains generalizations of several results
 from the classical harmonic analysis (in particular  the Paley-Wiener theorem)
  and it  follows essentially
from our more general results in \cite{P1}, \cite{P2},
\cite{P3}(see also \cite{A}, \cite{Pa}).
\begin{theorem} Let $f\in L_2(X)$. Then the following
statements are equivalent:
\begin{enumerate}
\item $f\in PW_{\omega}(X)$;
 \item $f\in
H^{\infty}(X)=\bigcap_{k=1}^{\infty}H^{k}(X), $ and for all $s\in
\mathbb{R}_{+}$ the following Bernstein inequality holds:
\begin{equation}
\|\Delta^{s}f\|\leq( \omega^{2}+\|\rho\|^{2})^{s}\|f\|;\label{B}
\end{equation}

\item $f\in H^{\infty}(X)$ and the
following Riesz interpolation formula holds
\begin{equation}
\Delta^{n}f=\left(\textbf{R}_{\Delta}^{
\omega^{2}+\|\rho\|^{2}}\right)^{n}f, n\in \mathbb{N};
\label{Rieszn}
\end{equation}

\item  For every $g\in
L_{2}(X)$ the function $t\mapsto
\left<e^{it\Delta}f,g\right>, t\in \mathbb{R}^{1}$,
 is bounded on the real line and has an extension to the complex
plane as an entire function of the exponential type
$\omega^{2}+\|\rho\|^{2}$;

\item The abstract-valued
function $t\mapsto e^{it\Delta}f$  is bounded on the real line and has an
extension to the complex plane as an entire function of the
exponential type $\omega^{2}+\|\rho\|^{2}$;

\item The solution $u(t),
t\in \mathbb{R}^{1}$, of the Cauchy problem
$$
i\frac{\partial u(t)}{\partial t}=\Delta u(t), u(0)=f,
i=\sqrt{-1},
$$
has a holomorphic extension $u(z)$ to the
complex plane $\mathbb{C}$ satisfying
$$
\|u(z)\|_{L_{2}(X)}\leq e^{(\omega^{2}+\|\rho\|^{2})|\Im
z|}\|f\|_{L_{2}(X)}.
$$
\end{enumerate}
\label{PW}

\end{theorem}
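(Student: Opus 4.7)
The plan is to prove all six statements equivalent by reducing everything to a single spectral-theoretic fact. The intertwining relation $\widehat{\Delta f}(\lambda,b) = -(\|\lambda\|^{2}+\|\rho\|^{2})\hat f(\lambda,b)$ together with the Plancherel isomorphism identifies $PW_{\omega}(X)$ with the spectral subspace of the self-adjoint operator $-\Delta$ corresponding to the interval $[\|\rho\|^{2},\omega^{2}+\|\rho\|^{2}]$. Each of (1)--(6) will be recognized as a reformulation of this spectral localization; the proof is then organized as a cycle of implications among them.

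First I would dispatch (1)$\Leftrightarrow$(2) and the entire-extension statements (4)--(6) by spectral calculus. For (1)$\Rightarrow$(2), Plancherel gives $\|\Delta^{s}f\|^{2} = \int(\|\lambda\|^{2}+\|\rho\|^{2})^{2s}|\hat f(\lambda,b)|^{2}\,d\mu$, which is bounded by $(\omega^{2}+\|\rho\|^{2})^{2s}\|f\|^{2}$; conversely, if the spectral measure of $-\Delta$ for $f$ had positive mass on $(\omega^{2}+\|\rho\|^{2}+\varepsilon,\infty)$ for some $\varepsilon>0$, the ratio $\|\Delta^{s}f\|/(\omega^{2}+\|\rho\|^{2})^{s}$ would blow up as $s\to\infty$, contradicting (\ref{B}). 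For (1)$\Leftrightarrow$(5), band-limitedness yields $e^{iz\Delta}f=\int e^{iz\tau}dP_{\tau}f$ with spectral support in $[-(\omega^{2}+\|\rho\|^{2}),-\|\rho\|^{2}]$, giving an entire $L_{2}(X)$-valued extension obeying the stated exponential bound; conversely, any such extension forces each scalar function $\langle e^{it\Delta}f,g\rangle = \int e^{it\tau}d\langle P_{\tau}f,g\rangle$ to be entire of the same type and bounded on $\mathbb{R}$, so the classical Paley--Wiener theorem applied to this scalar spectral measure, combined with the fact that the spectrum of $\Delta$ lies in $(-\infty,-\|\rho\|^{2}]$, pins the support back into $[-(\omega^{2}+\|\rho\|^{2}),-\|\rho\|^{2}]$. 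The equivalence (4)$\Leftrightarrow$(5) is the standard coincidence of weak and strong analyticity for Hilbert-space-valued functions, and (5)$\Leftrightarrow$(6) is immediate once one writes the Cauchy solution as $u(z)=e^{-iz\Delta}f$ and invokes Stone's theorem.

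The one nonroutine item is the Riesz identity (3). Granting (5), the vector-valued function $t\mapsto e^{it\Delta}f$ is entire of exponential type $\sigma:=\omega^{2}+\|\rho\|^{2}$ and uniformly bounded on $\mathbb{R}$; the classical Riesz interpolation formula, applied weakly through the scalar auxiliaries $\langle e^{it\Delta}f,g\rangle$ and then reassembled in $L_{2}(X)$, reproduces the series (\ref{Riesz1}) and yields the $n=1$ case of (\ref{Rieszn}) after matching the factor $i$ arising from $\frac{d}{dt}e^{it\Delta}f|_{t=0}=i\Delta f$. Iteration, using that $\textbf{R}_{\Delta}^{\sigma}$ and $\Delta$ commute on $PW_{\omega}(X)$ through their common spectral calculus, produces (\ref{Rieszn}) for every $n\in\mathbb{N}$. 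Conversely, (3) combined with the norm bound (\ref{Riesznorm}) gives $\|\Delta^{n}f\|\le\sigma^{n}\|f\|$ for every integer $n$, which promotes to the full Bernstein inequality (\ref{B}) by log-convexity of $s\mapsto\log\|\Delta^{s}f\|$ in the spectral measure of $-\Delta$. The main obstacle I anticipate is the careful bookkeeping of the $i$-factors and sign conventions when aligning the scalar Riesz formula with the definition (\ref{Riesz1}), together with justifying the interchange of the series with the weak pairing; the transfer of a scalar Paley--Wiener-type identity to an operator equation for $\Delta$ is by now standard, but it is the only step requiring genuine analytic input beyond abstract spectral theory.
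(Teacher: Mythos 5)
The paper does not actually prove Theorem \ref{PW}: it states that the result ``follows essentially from our more general results in [P1], [P2], [P3]'' and moves on. So there is no in-paper argument to compare against line by line; what can be said is that your spectral-theoretic reduction --- identifying $PW_{\omega}(X)$ via the intertwining relation $\widehat{\Delta f}=-(\|\lambda\|^{2}+\|\rho\|^{2})\hat f$ with the spectral subspace of $-\Delta$ for $[\|\rho\|^{2},\omega^{2}+\|\rho\|^{2}]$, and then reading each of (1)--(6) as a statement about the support of the scalar spectral measures $d\left<P_{\tau}f,g\right>$ --- is exactly the mechanism of the cited references, where these equivalences are proved for an arbitrary self-adjoint operator. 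Your cycle of implications is complete and each step is sound: (1)$\Leftrightarrow$(2) by Plancherel plus the growth of $\|\Delta^{s}f\|$ off the band; (1)$\Leftrightarrow$(5) by the classical Paley--Wiener theorem for the scalar spectral measures together with the lower bound $\mathrm{spec}(-\Delta)\subset[\|\rho\|^{2},\infty)$; (5)$\Rightarrow$(3)$\Rightarrow$(2) via the scalar Riesz interpolation formula, the norm bound (\ref{Riesznorm}), and log-convexity of $s\mapsto\|\Delta^{s}f\|$.

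Two points deserve explicit attention. First, the $i$-factor you flag in step (3) is not mere bookkeeping: testing the Riesz formula on $F(t)=e^{i\sigma t}$ shows that for $F(t)=e^{it\Delta}f$ of type $\sigma=\omega^{2}+\|\rho\|^{2}$ one obtains $F'(0)=i\Delta f=\textbf{R}_{\Delta}^{\sigma}f$ with $\textbf{R}_{\Delta}^{\sigma}$ as defined in (\ref{Riesz1}), so (\ref{Rieszn}) as printed should read $(i\Delta)^{n}f=\left(\textbf{R}_{\Delta}^{\sigma}\right)^{n}f$ (equivalently, the definition of $\textbf{R}_{\Delta}^{\sigma}$ needs a factor $-i$). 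Your converse direction is unaffected since $\|(i\Delta)^{n}f\|=\|\Delta^{n}f\|$, but a final write-up should state which convention is being repaired. Second, in (4)$\Rightarrow$(5) the passage from weak to strong analyticity gives entirety of $z\mapsto e^{iz\Delta}f$, but the norm bound $\|e^{iz\Delta}f\|\leq Ce^{\sigma|\Im z|}$ requires an extra application of the uniform boundedness principle (or a Phragm\'en--Lindel\"of argument) to the family of functionals $g\mapsto e^{-\sigma|\Im z|}\left<e^{iz\Delta}f,g\right>$; this should be said rather than absorbed into ``standard coincidence.'' With those two repairs the proposal is a complete and correct proof.
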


Now we give new characterizations of the space $PW_{\omega}(X)$.
We will need the following Lemma which was proved in \cite{P1}.
\begin{lemma}If for some $f\in H^{\infty}(X)$ and a certain $\sigma>0$ the upper bound
\begin{equation}
\sup _{k\in N
}(\sigma^{-k}\|\Delta^{k}f\|)=C(f,\sigma)<\infty,\label{251}
\end{equation}
is finite,  then $C(f,\sigma)\leq \|f\|$ and the following
inequality holds
$$
\|\Delta^{k}f\|\leq \sigma^{k}\|f\|, k\in \mathbb{N}.
$$
 \label{Lem}
\end{lemma}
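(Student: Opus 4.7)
The plan is to invoke the spectral theorem for the self-adjoint positive operator $-\Delta$ on $L_2(X)$ and reinterpret the hypothesis as a support condition on the scalar spectral measure associated with $f$. Let $\{E_\tau\}_{\tau\ge 0}$ be the spectral resolution of $-\Delta$ and set $d\mu_f(\tau)=d\langle E_\tau f,f\rangle$. Then $\mu_f$ is a finite nonnegative Borel measure on $[0,\infty)$ of total mass $\|f\|^2$, and because $f\in H^\infty(X)$ all powers $\Delta^k f$ lie in $L_2(X)$, so the functional calculus gives
$$
\|\Delta^k f\|^2=\int_0^\infty\tau^{2k}\,d\mu_f(\tau),\qquad k\in\mathbb{N}.
$$

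The key step is to identify $\lim_{k\to\infty}\|\Delta^k f\|^{1/k}$ with the essential supremum $M$ of the identity function with respect to $\mu_f$. This is the standard $L^{2k}\to L^\infty$ limit for a finite measure: on one hand $\|\Delta^k f\|^2\le M^{2k}\|f\|^2$ yields $\|\Delta^k f\|^{1/k}\le M\|f\|^{1/k}\to M$; on the other hand, for any $\varepsilon>0$ the set $\{M-\varepsilon<\tau\le M\}$ has positive $\mu_f$-measure $c_\varepsilon$, so $\|\Delta^k f\|^{1/k}\ge(M-\varepsilon)c_\varepsilon^{1/(2k)}\to M-\varepsilon$. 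The hypothesis $\|\Delta^k f\|\le C(f,\sigma)\,\sigma^k$ forces $\|\Delta^k f\|^{1/k}\le C(f,\sigma)^{1/k}\sigma\to\sigma$, and combining these gives $M\le\sigma$; equivalently, $\mu_f$ is concentrated on the interval $[0,\sigma]$.

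Once this support constraint is established, the conclusions are immediate: for every $k\in\mathbb{N}$,
$$
\|\Delta^k f\|^2=\int_0^\sigma\tau^{2k}\,d\mu_f(\tau)\le\sigma^{2k}\mu_f([0,\sigma])=\sigma^{2k}\|f\|^2,
$$
so $\|\Delta^k f\|\le\sigma^k\|f\|$, and passing to the sup over $k\in\mathbb{N}$ in the inequality $\sigma^{-k}\|\Delta^k f\|\le\|f\|$ delivers $C(f,\sigma)\le\|f\|$.

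\textbf{Main obstacle.} There is no deep obstacle here; the entire argument is a one-line consequence of the spectral theorem together with the elementary $L^{2k}\to L^\infty$ limit for a finite measure. The only point requiring care is the justification of the spectral-calculus identity above for every $k$, which is precisely why the assumption $f\in H^\infty(X)$ is included: it guarantees $\Delta^k f\in L_2(X)$ for all $k$, so $\mu_f$ has finite moments of every order and the use of the spectral integral is legitimate.
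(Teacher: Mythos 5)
Your proof is correct. Note that the paper itself does not prove this lemma --- it only cites \cite{P1} --- so there is no in-paper argument to compare against; but the argument in that reference is likewise spectral-theoretic (one shows via the resolution of the identity $E_\tau$ of $-\Delta$ that $E\bigl((\sigma,\infty)\bigr)f=0$ under the hypothesis, which is exactly your conclusion that $\mu_f$ is concentrated on $[0,\sigma]$), so your route is essentially the standard one. The one step worth stating explicitly is the case $M=\infty$: there the upper half of your $L^{2k}\to L^\infty$ argument is vacuous, but the lower half already shows $\|\Delta^k f\|^{1/k}\to\infty$, contradicting $\|\Delta^k f\|^{1/k}\le C(f,\sigma)^{1/k}\sigma\to\sigma$, so $M\le\sigma$ holds in all cases.
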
For a vector $f\in PW_{\omega }(X)$ the notation
$\omega_{f}$ will be used for a smallest positive number such that
$\Pi_{\omega_{f}}$ contains the support of the Helgason-Fourier
transform $\widehat{f}$. The following Theorem gives a new
characterization of the Paley-Wiener spaces.
\begin{theorem}
A vector $f\in L_{2}(X)$ belongs to the space $PW_{\omega_{f}}(X),
0<\omega_{f}<\infty,$ if and only if $f$ belongs to the set
$H^{\infty}(X)$, the limit
$$
 \lim_{k\rightarrow \infty}\|\Delta^{k}f\|^{1/k}
$$
exists and
\begin{equation}
\lim_{k\rightarrow
\infty}\|\Delta^{k}f\|^{1/k}=\omega_{f}^{2}+\|\rho\|^{2}.\label{limitcond}
\end{equation}

\end{theorem}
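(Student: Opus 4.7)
The plan is to translate the problem into a spectral statement about $\widehat{f}$ via the Plancherel identity, using the fact that $\widehat{\Delta f}(\lambda,b)=-(\|\lambda\|^{2}+\|\rho\|^{2})\widehat{f}(\lambda,b)$. This gives, for every $f\in H^{\infty}(X)$,
\begin{equation}
\|\Delta^{k}f\|^{2}=\int_{\textbf{a}^{*}_{+}\times\mathcal{B}}\left(\|\lambda\|^{2}+\|\rho\|^{2}\right)^{2k}|\widehat{f}(\lambda,b)|^{2}|c(\lambda)|^{-2}d\lambda db,
\end{equation}
so both directions reduce to understanding how this integral concentrates near the essential supremum of the multiplier $M(\lambda)=\|\lambda\|^{2}+\|\rho\|^{2}$ on the support of $\widehat{f}$.

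For the forward direction, assume $f\in PW_{\omega_{f}}(X)$. The Bernstein inequality of Theorem \ref{PW}(2) yields $\|\Delta^{k}f\|^{1/k}\leq (\omega_{f}^{2}+\|\rho\|^{2})\|f\|^{1/k}$, so $\limsup_{k}\|\Delta^{k}f\|^{1/k}\leq \omega_{f}^{2}+\|\rho\|^{2}$. For the matching $\liminf$ I exploit the minimality in the definition of $\omega_{f}$: for every $0<\epsilon<\omega_{f}$ the shell $E_{\epsilon}=\{(\lambda,b):\omega_{f}-\epsilon<\|\lambda\|\leq\omega_{f}\}$ carries strictly positive $|\widehat{f}|^{2}|c(\lambda)|^{-2}d\lambda db$-measure $c_{\epsilon}$, for otherwise $\widehat{f}$ would be supported in $\Pi_{\omega_{f}-\epsilon}$, contradicting the choice of $\omega_{f}$. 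Restricting the displayed integral to $E_{\epsilon}$ gives $\|\Delta^{k}f\|^{1/k}\geq \left((\omega_{f}-\epsilon)^{2}+\|\rho\|^{2}\right)c_{\epsilon}^{1/(2k)}$, whence $\liminf_{k}\|\Delta^{k}f\|^{1/k}\geq (\omega_{f}-\epsilon)^{2}+\|\rho\|^{2}$, and $\epsilon\to 0$ closes this direction.

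For the converse, assume $f\in H^{\infty}(X)$ with $\lim_{k}\|\Delta^{k}f\|^{1/k}=\omega_{f}^{2}+\|\rho\|^{2}$. For any $\epsilon>0$ set $\sigma_{\epsilon}=\omega_{f}^{2}+\|\rho\|^{2}+\epsilon$; the hypothesis forces $\sigma_{\epsilon}^{-k}\|\Delta^{k}f\|\to 0$, so condition (\ref{251}) of Lemma \ref{Lem} holds, and Lemma \ref{Lem} promotes this boundedness to the uniform Bernstein bound $\|\Delta^{k}f\|\leq \sigma_{\epsilon}^{k}\|f\|$ for every $k\in\mathbb{N}$. Spectral considerations, equivalently condition (2) of Theorem \ref{PW}, then yield $f\in PW_{\sqrt{\omega_{f}^{2}+\epsilon}}(X)$. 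Intersecting over $\epsilon\downarrow 0$ gives $\widehat{f}=0$ almost everywhere on $\{\|\lambda\|>\omega_{f}\}$, so $f\in PW_{\omega_{f}}(X)$. That $\omega_{f}$ is the smallest such radius is a direct consequence of the already-proven forward direction: any $\omega'<\omega_{f}$ with $f\in PW_{\omega'}(X)$ would force $\lim_{k}\|\Delta^{k}f\|^{1/k}\leq \omega'^{2}+\|\rho\|^{2}<\omega_{f}^{2}+\|\rho\|^{2}$, contradicting the hypothesis.

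The principal obstacle is the sharp lower bound on the $\liminf$ in the forward direction: one must convert the abstract minimality of $\omega_{f}$ (defined only through an a.e.\ support statement) into a quantitative positive mass on arbitrarily thin shells, and then observe that the factor $c_{\epsilon}^{1/(2k)}\to 1$ as $k\to\infty$ no matter how small $c_{\epsilon}$ happens to be. On the converse side, the essential ingredient is Lemma \ref{Lem}, which alone allows the purely asymptotic information $\|\Delta^{k}f\|^{1/k}\to\omega_{f}^{2}+\|\rho\|^{2}$ to be promoted into the all-$k$ Bernstein bound demanded by Theorem \ref{PW}(2).
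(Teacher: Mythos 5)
Your proof is correct, and while the converse direction follows the paper's own route, your argument for the crucial lower bound in the forward direction is genuinely different from the paper's. For the converse, both you and the paper pass from the asymptotic hypothesis to the uniform bound $\sup_k \sigma^{-k}\|\Delta^k f\|<\infty$, invoke Lemma \ref{Lem} to get the all-$k$ Bernstein inequality, and then apply Theorem \ref{PW}(2); your $\epsilon$-enlargement $\sigma_\epsilon=\omega_f^2+\|\rho\|^2+\epsilon$ is in fact slightly more careful than the paper, which asserts the finiteness of $\sup_k(\sigma^2+\|\rho\|^2)^{-k}\|\Delta^k f\|$ directly from the existence of the limit (a step that strictly speaking needs the log-convexity of $k\mapsto\|\Delta^k f\|$ to justify). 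Where you diverge is the forward direction: the paper argues by contradiction, supposing $\liminf_k\|\Delta^k f\|^{1/k}=\sigma^2+\|\rho\|^2<\omega_f^2+\|\rho\|^2$ along a subsequence, proving the Kolmogorov--Stein interpolation inequality $\|\Delta^m f\|\le\pi\|\Delta^k f\|^{m/k}\|f\|^{1-m/k}$ to upgrade the subsequence information to a uniform bound for all $k$, and then feeding this into Lemma \ref{Lem} and Theorem \ref{PW} to conclude $f\in PW_{\sigma}(X)$ with $\sigma<\omega_f$, contradicting the minimality of $\omega_f$. You instead bound the spectral integral from below on the thin shell $\{\omega_f-\epsilon<\|\lambda\|\le\omega_f\}$, whose $|\widehat f|^2\,d\mu$-mass $c_\epsilon$ is positive precisely by the minimality of $\omega_f$, and exploit $c_\epsilon^{1/(2k)}\to 1$. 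Your version is more elementary and self-contained: it uses only the Plancherel formula and the multiplier identity $\widehat{\Delta f}=-(\|\lambda\|^2+\|\rho\|^2)\widehat f$, and it dispenses with the Kolmogorov--Stein machinery entirely. What the paper's argument buys is that it is phrased purely in terms of the operator calculus (norms of $\Delta^k f$ and the group $e^{it\Delta}$), so it transfers verbatim to settings where no explicit diagonalization of the operator is available and it reuses the lemmas already established for the rest of the section.
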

\begin{proof}

If $f\in PW_{\omega_{f}}(X)$ then $f$ is obviously in
$H^{\infty}(X)$ and for $\widehat{f}$ we have
  $$
\|\Delta^{k}f\|^{1/k}=\left(
\int_{\|\lambda\|<\omega_{f}}\int_{\mathcal{B}}(\|\lambda\|^{2}+\|\rho\|^{2})
^{k} |\widehat{f}(\lambda,b)|^{2}|c(\lambda)|^{-2}d\lambda
db\right)^{1/2k}\leq $$ $$(\omega_{f}^{2}+\|\rho\|^{2})
\|f\|^{1/k},
$$
which shows that
\begin{equation}
\overline{\lim}_{k\rightarrow\infty}\|\Delta^{k}f\|^{1/k}\leq
\omega_{f}^{2}+\|\rho\|^{2}.\label{23}
\end{equation}

Now, assume that
\begin{equation}
\underline{\lim}_{k\rightarrow\infty}\|\Delta^{k}f\|^{1/k}=\sigma^{2}+\|\rho\|^{2}<
\omega_{f}^{2}+\|\rho\|^{2},
\end{equation}
which means that there exists a sequence $k_{j}\rightarrow\infty$
for which
\begin{equation}
\underline{\lim}_{k_{j}\rightarrow\infty}\|\Delta^{k_{j}}f\|^{1/k_{j}}=
\sigma^{2}+\|\rho\|^{2}< \omega_{f}^{2}+\|\rho\|^{2}.\label{ASS}
\end{equation}
Note that the following inequality holds
\begin{equation}
\|\Delta^{m}f\|\leq \pi\|\Delta^{k}f\|^{m/k}\|f\|^{1-m/k}, 0\leq
m\leq k.\label{KS}
\end{equation}
Indeed, for any $h\in L_{2}(X)$ the Kolmogorov-Stein inequality
gives
$$
\left\|\left(\frac{d}{dt}\right)^{m}\left<e^{t\Delta}f,h\right>\right\|_{
C(R^{1})}\leq
\pi\left\|\left(\frac{d}{dt}\right)^{k}\left<e^{t\Delta}f,h\right>\right\|_{
C(R^{1})}^{m/k}\left\|\left<e^{t\Delta}f,h\right>\right\|_{C(R^{1})}^{1-m/k},
$$
or
$$
\left\|\left<e^{t\Delta}\Delta^{m}f,h\right>\right\|_{
C(R^{1})}\leq
\pi\left\|\left<e^{t\Delta}\Delta^{k}f,h\right>\right\|_{
C(R^{1})}^{m/k}\left\|\left<e^{t\Delta}f,h\right>\right\|_{C(R^{1})}^{1-m/k}.
$$
Applying the Schwartz inequality we obtain
$$
\left\|\left<e^{it\Delta}\Delta^{m}f,h\right>\right\|_{
C(R^{1})}\leq\pi\|h\|^{m/k}\|\Delta^{k}f\|^{m/k}\|h\|^{1-m/k}\|f\|^{1-m/k}
=
$$
$$
\pi\|\Delta^{k}f\|^{m/k}\|f\|^{1-m/k}\|h\|.
$$
When $t=0$ it gives
$$
\left|\left<\Delta^{m}f,h\right>\right|\leq\pi\|\Delta^{k}f\|^{m/k}\|f\|^{1-m/k}\|h\|.
$$
By choosing $h$ such that
$\left|\left<\Delta^{m}f,h\right>\right|=\|\Delta^{m}f\|$ and
$\|h\|=1$ we obtain (\ref{KS}). The assumption (\ref{ASS}) and the
inequality (\ref{KS}) imply that the quantity
\begin{equation}
\sup _{k\in N }(\sigma
^{2}+\|\rho\|^{2})^{-k}\|\Delta^{k}f\|=C(f,\sigma),\label{252}
\end{equation}
is finite and the previous Lemma \ref{Lem} gives the inequality
$$
\|\Delta^{k}f\|\leq (\sigma ^{2}+\|\rho\|^{2})^{k}\|f\|, k\in
\mathbb{N}.
$$
According to the Theorem \ref{PW} the last  inequality shows that
$f\in PW_{\sigma}(X)$. Since $ \sigma<\omega_{f}$  this
contradicts to the definition of $\omega_{f}$.

Conversely, assume that the following  holds
\begin{equation}
\lim_{k\rightarrow
\infty}\|\Delta^{k}f\|^{1/k}=\sigma^{2}+\|\rho\|^{2}\label{assump100}
\end{equation}
for a certain $\sigma>0$. It would imply
\begin{equation}
\sup_{k\in
\mathbb{N}}(\sigma^{2}+\|\rho\|^{2})^{-k}\|\Delta^{k}f\|<C(f,
\sigma)
\end{equation}
for some $C(f,\sigma)>0$ and by Lemma \ref{Lem} one would have
$$
\|\Delta^{k}f\|\leq (\sigma ^{2}+\|\rho\|^{2})^{k}\|f\|, k\in
\mathbb{N}.
$$
It shows that $f\in PW_{\sigma}(X)$ and there exists an
$\omega_{f}\leq \sigma$. But as it was just shown, this fact
implies (\ref{limitcond}) which together with (\ref{assump100})
gives $\omega_{f}=\sigma$.
 The Theorem is proved.
\end{proof}
 The above Lemma and the proof of the Theorem imply two other
 characterizations of the Paley-Wiener spaces.
 \begin{corollary} The following holds true:
 \begin{enumerate}
\item a function$f\in L_{2}(X)$ belongs to $PW_{\omega}(X)$ if and
only if $f\in H^{\infty}(X)$ and  the upper bound
\begin{equation}
\sup _{k\in \mathbb{N} }\left((\omega
^{2}+\|\rho\|^{2})^{-k}\|\Delta^{k}f\|\right)<\infty
\end{equation}
is finite,

\item a function $f\in L_{2}(X)$ belongs to $PW_{\omega}(X)$ if
and only if $f\in H^{\infty}(X)$ and
\begin{equation}
\underline{\lim}_{k\rightarrow\infty}\|\Delta^{k}f\|^{1/k}=\omega^{2}+\|\rho\|^{2}<\infty.
\end{equation}
In this case $\omega=\omega_{f}$.
 \end{enumerate}

 \end{corollary}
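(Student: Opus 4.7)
The plan is to deduce both parts of the corollary directly from Theorem~\ref{PW} (especially the equivalence $(1)\Leftrightarrow(2)$), from Lemma~\ref{Lem}, and from the limit characterization that was just proved; no new machinery is needed.

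For part~(1), the forward direction is immediate: if $f\in PW_{\omega}(X)$ then the Bernstein inequality $\|\Delta^{s}f\|\leq(\omega^{2}+\|\rho\|^{2})^{s}\|f\|$ from item~(2) of Theorem~\ref{PW}, taken at integer $s=k$, shows the supremum is at most $\|f\|$ and so is finite. For the backward direction, suppose
$$
\sup_{k\in\mathbb{N}}(\omega^{2}+\|\rho\|^{2})^{-k}\|\Delta^{k}f\|<\infty.
$$
Applying Lemma~\ref{Lem} with $\sigma=\omega^{2}+\|\rho\|^{2}$ yields $\|\Delta^{k}f\|\leq(\omega^{2}+\|\rho\|^{2})^{k}\|f\|$ for every $k\in\mathbb{N}$. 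This is exactly the integer case of the Bernstein inequality (\ref{B}), so by the equivalence $(2)\Rightarrow(1)$ in Theorem~\ref{PW}, $f\in PW_{\omega}(X)$. (Strictly speaking one needs the Bernstein estimate for all $s\in\mathbb{R}_{+}$, but this follows by spectral interpolation between consecutive integer powers once the integer bound is in hand.)

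For part~(2), if $f\in PW_{\omega}(X)$ then in particular $f$ lies in $PW_{\omega_{f}}(X)$, and the theorem just proved gives that $\lim_{k\to\infty}\|\Delta^{k}f\|^{1/k}$ exists and equals $\omega_{f}^{2}+\|\rho\|^{2}$; equating this with $\omega^{2}+\|\rho\|^{2}$ forces $\omega=\omega_{f}$, as the corollary asserts. Conversely, assume $f\in H^{\infty}(X)$ with $\underline{\lim}_{k\to\infty}\|\Delta^{k}f\|^{1/k}=\omega^{2}+\|\rho\|^{2}<\infty$. Then along some subsequence $k_{j}\to\infty$ one has $\|\Delta^{k_{j}}f\|^{1/k_{j}}\to\omega^{2}+\|\rho\|^{2}$, and the Kolmogorov--Stein inequality (\ref{KS}) — already established during the proof of the previous theorem and freely available here — lets us interpolate to bound $\|\Delta^{m}f\|$ for every intermediate $m$ in terms of $\|\Delta^{k_{j}}f\|$ and $\|f\|$. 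Passing to the limit in $j$ shows that for every $\varepsilon>0$ the quantity $(\omega^{2}+\|\rho\|^{2}+\varepsilon)^{-k}\|\Delta^{k}f\|$ is bounded uniformly in $k$, and part~(1) (applied with $\omega^{2}+\varepsilon$ in place of $\omega^{2}$) together with Lemma~\ref{Lem} places $f$ in $PW_{\sqrt{\omega^{2}+\varepsilon}}(X)$. Letting $\varepsilon\to 0$ gives $f\in PW_{\omega}(X)$. The forward direction, already proved, then identifies $\omega$ with $\omega_{f}$.

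The proofs of both parts are essentially bookkeeping; there is no serious obstacle. The only point that requires a moment of care is the backward direction of part~(2), where the liminf must be promoted to a genuine uniform bound on $\|\Delta^{k}f\|$ for \emph{all} $k$, not merely along the distinguished subsequence — this is precisely what the Kolmogorov--Stein inequality (\ref{KS}) delivers, which is why it was extracted during the proof of the preceding theorem.
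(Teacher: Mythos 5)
Your proof is correct and follows exactly the route the paper intends: the paper states this corollary with only the remark that it follows from Lemma~3.1 and the proof of Theorem~3.2, and your write-up fills in precisely those steps (Bernstein plus Lemma~3.1 for part~(1), and the Kolmogorov--Stein interpolation argument from the theorem's proof to upgrade the $\underline{\lim}$ to a uniform bound for part~(2)). No substantive differences to report.
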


In \cite{P1} the following Lemma was proved.
\begin{lemma}
There exists a natural number $N=N(X)\in \mathbb{N}$ such that for
any sufficiently small $r>0$ there
 exists a set of points $\{x_{j}\}$ from $X$ with the following
 properties:

\begin{enumerate}
\item the balls $B(x_{j}, r/4)$ are disjoint, \item  the balls
$B(x_{j}, r/2)$ form a cover of $X$,
\item  the multiplicity of the
cover by balls $B(x_{j}, r)$ is not greater $N.$

\end{enumerate}

\end{lemma}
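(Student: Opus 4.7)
The plan is a standard packing-covering argument exploiting the homogeneity of $X = G/K$. I would construct the desired set $\{x_j\}$ as a maximal $r/4$-separated set, then use a volume comparison to control the multiplicity.

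First, I would invoke Zorn's lemma (or a transfinite induction) to produce a set $\{x_j\} \subset X$ that is maximal with respect to the property that the open balls $B(x_j, r/4)$ are pairwise disjoint, i.e.\ $d(x_i, x_j) \geq r/2$ for $i\neq j$. This immediately gives property (1). For property (2), note that if some $x \in X$ were not in any $B(x_j, r/2)$, then $B(x, r/4)$ would be disjoint from every $B(x_j, r/4)$, and one could adjoin $x$ to the set contradicting maximality. Hence $\{B(x_j, r/2)\}$ covers $X$.

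The real content is the multiplicity bound (3). Suppose $x \in X$ lies in $B(x_{j_1}, r), \ldots, B(x_{j_N}, r)$. Then $x_{j_1}, \ldots, x_{j_N} \in B(x, r)$ and, by property (1), the balls $B(x_{j_i}, r/4)$ are pairwise disjoint and all contained in $B(x, 5r/4)$. Comparing volumes gives
$$
N \cdot \min_i \mathrm{vol}\bigl(B(x_{j_i}, r/4)\bigr) \leq \mathrm{vol}\bigl(B(x, 5r/4)\bigr).
$$
Because $G$ acts transitively on $X$ by isometries, the volume of a ball depends only on its radius, so the inequality reduces to $N \leq V(5r/4)/V(r/4)$, where $V(\rho)$ denotes the common volume of any ball of radius $\rho$. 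Since $X$ is a smooth Riemannian manifold of dimension $d = \dim X$, for $r$ below the injectivity radius one has $V(\rho) = \omega_d \rho^d (1 + O(\rho^2))$ by the standard expansion of the Riemannian volume in normal coordinates. Consequently $V(5r/4)/V(r/4) \leq 5^d \cdot 2 =: N(X)$ for all sufficiently small $r$, and this bound depends only on $X$.

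The main obstacle is really just this last step, packaging the volume ratio into an $r$-independent constant; everything else is formal. One should note that homogeneity is essential here, since on a general manifold one would have to invoke Bishop-Gromov and a lower curvature bound to make the doubling constant uniform, whereas here the identity $\mathrm{vol}(B(x,\rho)) = V(\rho)$ for every $x \in X$ makes the argument essentially trivial.
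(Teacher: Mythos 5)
Your argument is correct and is essentially the standard proof of this lemma; the paper itself does not reprove it but cites \cite{P1}, where the same construction (a maximal set with disjoint $r/4$-balls, covering by maximality, and a volume-comparison bound on the multiplicity using homogeneity/bounded geometry) is used. The only point worth making explicit is that on a symmetric space of noncompact type the exponential map is a global diffeomorphism, so the small-radius volume expansion you invoke is unproblematic for all sufficiently small $r$.
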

We will use notation $Z=Z( r, N)$ for any set of points
$\{x_{j}\}\in X$ which satisfies the properties (1)- (3) from the
last Lemma and we will call such set a metric $(r,N)$-lattice of
$X$.

If $\delta_{x_{j}}$ is a Dirac distribution at a point $x_{j}\in
X$ then according to the inversion formula for the
Helgason-Fourier transform we have
$$
\left<\delta_{x_{j}},f\right>= w^{-1}\int_{\textbf{a}^{*}\times
\mathcal{B}}\hat{f}(\lambda,b)e^{(i\lambda+\rho)(A(x_{j},b))}|c(\lambda)|^{-2}d\lambda
db.
$$

It implies that if $f\in L_{2}(X)$ then the action on
$\hat{f}(\lambda, b)$ of the Helgason-Fourier transform $
\widehat{\delta_{x_{j}}}$ of $ \delta_{x_{j}}$ is given by the
formula
\begin{equation}
\hat{f}(\lambda, b) \rightarrow \left<\widehat{\delta_{x_{j}}},
\hat{f}\right> =w^{-1}\int_{\textbf{a}^{*}\times
\mathcal{B}}e^{(i\lambda+\rho)(A(x_{j},b))}\hat{f}(\lambda,
b)|c(\lambda)|^{-2}d\lambda db.
\end{equation}

We introduce the notation $k^{\omega}_{x_{j}}$ for a function
which is a restriction of the smooth function
$\widehat{\delta_{x_{j}}}$ to the set $\Pi_{\omega}$:
$$
k^{\omega}_{x_{j}}=\widehat{\delta_{x_{j}}}|_{\Pi_{\omega}},
$$
and
$$
\left<k^{\omega}_{x_{j}},\widehat{f}\right>=w^{-1}\int_{\textbf{a}^{*}\times
\mathcal{B}}\chi_{\omega}(\lambda)e^{(i\lambda+\rho)(A(x_{j},b))}\hat{f}(\lambda,
b)|c(\lambda)|^{-2}d\lambda db, f\in L_{2}(X),
$$
where $\chi_{\omega}$ is the characteristic function of the set
$\Pi_{\omega}$.
\begin{theorem}
There exists a constant $ c(X)$ such that for any given
$\omega>0$, for every $(r,N)$-lattice  $Z_{\omega}=Z(r, N)$ with
$$
r=c(X)(\omega^{2}+\|\rho\|^{2})^{-1/2},
$$
the set of functions
 $\{k^{\omega}_{x_{j}}\}, x_{j}\in Z_{\omega},$ is a frame in the space
\begin{equation}
\Lambda_{\omega}=L_{2}\left(\Pi_{\omega};
|c(\lambda)|^{-2}d\lambda db\right)
\end{equation}
 and there exists a  frame
 $\{\Theta_{x_{j}}\}$ in the space $PW_{\omega}(X)$
such that every $\omega$-band limited function $f\in
PW_{\omega}(X)$ can be reconstructed from a set of samples
$f(x_{j})=\left<\delta_{x_{j}},f\right>$ by using the formula
\begin{equation}
f=\sum_{x_{j}\in Z_{\omega}}f(x_{j})\Theta_{x_{j}}\label{decomp}.
\end{equation}

\end{theorem}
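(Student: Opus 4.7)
The plan is to reduce the frame property of $\{k^\omega_{x_j}\}$ in $\Lambda_\omega$ to an equivalent Plancherel--Polya sampling inequality for functions in $PW_\omega(X)$, and then invoke standard Hilbert-space frame theory to produce the dual frame $\{\widehat{\Theta_{x_j}}\}$ and the reconstruction formula.

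First I would record the duality underlying the whole argument. For $f\in PW_\omega(X)$, the Helgason--Fourier inversion formula and the definition of $k^\omega_{x_j}$ give
$$f(x_j)=\langle\delta_{x_j},f\rangle=w^{-1}\langle k^\omega_{x_j},\widehat f\,\rangle_{\Lambda_\omega}.$$
Since the Helgason--Fourier transform is a unitary isomorphism of $PW_\omega(X)$ onto $\Lambda_\omega$ by the Plancherel theorem, a two-sided frame inequality for $\{k^\omega_{x_j}\}$ in $\Lambda_\omega$ is equivalent, up to the factor $w^2$, to a sampling inequality
$$A\|f\|_{L_2(X)}^2\le\sum_{x_j\in Z_\omega}|f(x_j)|^2\le B\|f\|_{L_2(X)}^2,\qquad f\in PW_\omega(X),$$
with constants $A,B>0$ depending only on $X$. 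So the whole theorem reduces to establishing this sampling inequality under the stated choice of $r$.

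For the lower bound I would start from the covering $X=\bigcup_j B(x_j,r/2)$ supplied by the $(r,N)$-lattice. On each ball, a local Poincar\'e / Taylor-type inequality in exponential normal coordinates (uniform in $j$ by $G$-homogeneity of $X$) yields
$$\|f\|_{L_2(B(x_j,r/2))}^2\le C_1|B(x_j,r/2)|\,|f(x_j)|^2+C_2 r^2\|\nabla f\|_{L_2(B(x_j,r))}^2.$$
Summing over $j$ and using that the multiplicity of the cover by balls $B(x_j,r)$ is at most $N$ gives
$$\|f\|_{L_2(X)}^2\le C_1'\sum_j |B(x_j,r/2)|\,|f(x_j)|^2+C_2' r^2\|\nabla f\|_{L_2(X)}^2.$$
Because $f\in PW_\omega(X)$, Bernstein's inequality from Theorem \ref{PW}(ii) with $s=1/2$, together with the identity $\|\nabla f\|_{L_2(X)}^2=\|(-\Delta)^{1/2}f\|_{L_2(X)}^2$, yields $\|\nabla f\|_{L_2(X)}^2\le(\omega^2+\|\rho\|^2)\|f\|_{L_2(X)}^2$. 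Choosing the constant $c(X)$ so small that $C_2'\,c(X)^2\le 1/2$, the gradient term is absorbed into the left-hand side and the lower bound follows. For the upper bound I would apply a local mean-value / reproducing-kernel estimate on the disjoint balls $B(x_j,r/4)$, again using Bernstein to control higher-order Taylor remainders, and then sum to obtain $\sum_j|f(x_j)|^2\le B\|f\|^2$.

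Once the sampling inequality is in hand, the conclusion is routine frame theory. The frame operator $Sg=\sum_j\langle g,k^\omega_{x_j}\rangle\,k^\omega_{x_j}$ is a boundedly invertible positive operator on $\Lambda_\omega$; defining $\Theta_{x_j}\in PW_\omega(X)$ by $\widehat{\Theta_{x_j}}=w^{-1}S^{-1}k^\omega_{x_j}$ produces a frame in $PW_\omega(X)$, and the canonical frame expansion of $\widehat f$ reads
$$\widehat f=\sum_{x_j\in Z_\omega}\langle\widehat f,k^\omega_{x_j}\rangle\,S^{-1}k^\omega_{x_j}=\sum_{x_j\in Z_\omega}f(x_j)\,\widehat{\Theta_{x_j}},$$
which, after inverting the Helgason--Fourier transform, is exactly (\ref{decomp}). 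The principal obstacle is the local Poincar\'e/Bernstein step: on a symmetric space one must work in exponential coordinates and exploit the comparability of the $G$-invariant measure with Lebesgue measure on small balls, uniformly in the base point, as developed in the author's earlier work \cite{P1}, \cite{P2}, \cite{P3}. The role of the constant $c(X)$ is precisely to make the Bernstein correction small enough to be absorbed into the left-hand side of the lower sampling estimate.
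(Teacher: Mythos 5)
Your overall architecture coincides with the paper's: reduce the frame property of $\{k^{\omega}_{x_j}\}$ in $\Lambda_\omega$ to a two-sided Plancherel--Polya sampling inequality on $PW_\omega(X)$ via the Plancherel theorem, absorb a Bernstein-controlled remainder by taking $c(X)$ small, and then run the Duffin--Schaeffer machinery to get the dual frame and the reconstruction formula \eqref{decomp}. The paper does exactly this, except that it imports the two key local inequalities ready-made from \cite{P1} instead of proving them.

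The gap is in your lower sampling bound. The local inequality
\begin{equation*}
\|f\|_{L_2(B(x_j,r/2))}^2\le C_1\,|B(x_j,r/2)|\,|f(x_j)|^2+C_2\, r^2\,\|\nabla f\|_{L_2(B(x_j,r))}^2
\end{equation*}
is claimed as a general local estimate, and it is false once $\dim X=d\ge 2$ --- and every symmetric space of noncompact type has $d\ge 2$. Point evaluation is not controlled by one derivative in $L_2$: on a Euclidean ball take $f=1-\phi_\varepsilon$, where $\phi_\varepsilon$ equals $1$ at the center, is supported in $B(0,\varepsilon)$, and has $\|\nabla\phi_\varepsilon\|_{L_2}\to 0$ (a logarithmic cutoff for $d=2$, an $|x|^{2-d}$-type cutoff for $d\ge 3$); then the right-hand side tends to $0$ while the left-hand side stays bounded below. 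The correct local estimate --- the one the paper quotes from \cite{P1} --- necessarily involves $k>d/2$ derivatives,
\begin{equation*}
\|f\|\le C_1(X)\Bigl\{r^{d/2}\Bigl(\sum_{x_j\in Z}|f(x_j)|^2\Bigr)^{1/2}+r^{k}\|\Delta^{k/2}f\|\Bigr\},\qquad k>d/2,
\end{equation*}
because $f(x_j)$ only makes quantitative sense through the Sobolev embedding $H^{k}\hookrightarrow C$, $k>d/2$. The absorption step then still works, since Bernstein gives $r^{k}\|\Delta^{k/2}f\|\le c(X)^{k}\|f\|$ for $f\in PW_\omega(X)$ with your choice of $r$. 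The same remark applies to your upper bound, where the pointwise evaluation on the disjoint balls again requires $k>d/2$ derivatives rather than unspecified ``higher-order Taylor remainders''; the paper's second cited inequality $\bigl(\sum_j|f(x_j)|^2\bigr)^{1/2}\le C_2(X)\|f\|_{H^k(X)}$, $k>d/2$, is exactly what is needed. (A minor bookkeeping point: with your normalization $f(x_j)=w^{-1}\langle k^\omega_{x_j},\widehat f\,\rangle$, the canonical expansion forces $\widehat{\Theta_{x_j}}=w\,S^{-1}k^\omega_{x_j}$ rather than $w^{-1}S^{-1}k^\omega_{x_j}$; the paper avoids this by building $w^{-1}$ into the pairing on $\Lambda_\omega$.)
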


\begin{proof}

It was shown  in \cite{P1} that for any $k>d/2$  there exist
constants $C_{1}(X)>0, C_{2}(X)>0,  r_{0}(X)>0,$
 such that for any  for any $k>d/2$, any $0<r<r_{0}(X)$ and any $(r,N)$-lattice
 $Z=Z(r,N)$ the following inequality holds true

\begin{equation}
\|f\|\leq C_{1}(X)\left\{r^{d/2}\left(\sum_{x_{j}\in Z}
|f(x_{j})|^{2}\right)^{1/2}+r^{k}\|\Delta^{k/2}f\|\right\},
\end{equation}
where $f\in H^{k}(X), k>d/2,$ and
$$\left(\sum _{x_{j}\in Z}|f(x_{j})|^{2}\right)^{1/2}\leq
 C_{2}(X)\|f\|_{H^{k}(X)}, f\in H^{k}(X), k>d/2.
 $$

Along with the Bernstein inequality (\ref{B}) it implies that
there exist positive constants $c(X), c_{1}(X), c_{2}(X),$ such
that for every $\omega>0$, every lattice $Z_{\omega}=Z(r , N)$
with $r= c(X)(\omega^{2}+\|\rho\|^{2})^{-1/2}$ and every $f\in
PW_{\omega}(X)$ the following inequalities hold true

\begin{equation}
c_{1}(X)\left(\sum_{x_{j}\in
Z_{\omega}}\left|f(x_{j})\right|^{2}\right)^{1/2}\leq
r^{-d/2}\|f\|_{2} \leq c_{2}(X)\left(\sum_{x_{j}\in Z_{\omega}}
|f(x_{j})|^{2}\right)^{1/2}.
\end{equation}

An application of the Plancherel formula gives that there are
constants $A_{1}(X)>0,A_{2}(X)>0,$ such that for any $f\in
PW_{\omega}(X)$

\begin{equation}
A_{1}(X)\|\widehat{f}\|_{\Lambda_{\omega}}\leq
\left(\sum_{x_{j}\in
Z_{\omega}}\left|\left<\widehat{\delta_{x_{j}}},\widehat{f}\right>_{\Lambda_{\omega}}\right|^{2}\right)^{1/2}\leq
A_{2}(X)\|\widehat{f}\|_{\Lambda_{\omega}},\label{FI}
\end{equation}
where $\left<\cdot,\cdot\right>$ is the scalar product in the
space $\Lambda_{\omega}= L_{2}\left(\Pi_{\omega};
|c(\lambda)|^{-2}d\lambda db\right)$.

Since
$k^{\omega}_{x_{j}}=\widehat{\delta_{x_{j}}}|_{\Pi_{\omega}}\in
\Lambda_{\omega}$ and since for $f\in PW_{\omega}(X)$
$$
\left<\widehat{\delta_{x_{j}}},\widehat{f}\right>_{\Lambda_{\omega}}=
\left<k^{\omega}_{x_{j}},\widehat{f}\right>_{\Lambda_{\omega}},
$$
the inequalities (\ref{FI}) show that the set of functions
$\{k^{\omega}_{x_{j}}\}, x_{j}\in Z_{\omega},$ is a frame in the
space $\Lambda_{\omega}$.

It is known \cite{DS} that to construct a dual frame one has to
consider the so called frame operator
$$
F(\widehat{f})=\sum_{x_{j}\in
Z_{\omega}}\left<k^{\omega}_{x_{j}},\widehat{f}\right>_{\Lambda_{\omega}}
k^{\omega}_{x_{j}}.
$$
One can show that the frame operator $F$ is invertible and the
formula
\begin{equation}
\widehat{\Theta_{x_{j}}}=F^{-1}k^{\omega}_{x_{j}}
\end{equation}
gives a dual frame $\Lambda_{\omega}$. A reconstruction formula of
a function $f$ can be written in terms of the dual frame as
\begin{equation}
\widehat{f}=\sum_{x_{j}\in
Z_{\omega}}\left<\widehat{\Theta_{x_{j}}},\widehat{f}\right>_{\Lambda_{\omega}}
k^{\omega}_{x_{j}}= \sum_{x_{j}\in
Z_{\omega}}\left<k^{\omega}_{x_{j}},\widehat{f}\right>_{\Lambda_{\omega}}\widehat{\Theta_{x_{j}}},\label{Frecon}
\end{equation}
where inner product is taken in the space $\Lambda_{\omega}$.

Functions $\widehat{\Theta_{x_{j}}}$ belong to the space
$\Lambda_{\omega}= L_{2}\left(\Pi_{\omega};
|c(\lambda)|^{-2}d\lambda db\right)$. By extending them by zero
outside of the set $\Pi_{\omega}$ we can treat them as functions
in
$L_{2}=L_{2}(\textbf{a}^{*}_{+}\times\mathcal{B},|c(\lambda)|^{-2}d\lambda
db )$ and then by taking the inverse Helgason-Fourier transform
$\mathcal{H}^{-1}$ of both sides of the  formula (\ref{Frecon}) we
obtain the formula (\ref{decomp}) of our Theorem because for $f\in
PW_{\omega}(X)$ the Plancherel Theorem gives
 $$
\left<k^{\omega}_{x_{j}},\widehat{f}\right>_{\Lambda_{\omega}}=
\left<\widehat{\delta_{x_{j}}},
\widehat{f}\right>_{L_{2}(\mathcal{R},d\mu)}=
\left<\delta_{x_{j}},f\right>=f(x_{j}).
$$

\end{proof}

In the classical case when $X=\mathbb{R}$ is the one-dimensional
Euclidean space we have
\begin{equation}
\widehat{\delta_{x_{j}}}(\lambda)= e^{i x_{j}\lambda}.
\end{equation}
In this situation  the Theorem  means that the complex
exponentials $e^{i x_{j}\lambda}, x_{j}\in Z(r, N )$ form a frame
in the space $L_{2}([-\omega, \omega])$. Note that in the case of
a uniform point-wise sampling in the space $L_{2}(\mathbb{R})$
this result gives the classical sampling formula
$$
f(t)= \sum f(\gamma n\Omega )\frac{\sin(\omega (t-\gamma n\Omega
))}{\omega (t-\gamma n\Omega )}, \Omega =\pi /\omega, \gamma<1,
$$
with a certain oversampling.

The formula (\ref{Frecon}) can be treated as a Discrete
Helgason-Fourier transform in the following sense.

\begin{corollary}
There exists a constant $ c(X)$ such that for any given
$\omega>0$, for every $(r,N)$-lattice  $Z_{\omega}=Z(r, N)$ with
$$
r=c(X)(\omega^{2}+\|\rho\|^{2})^{-1/2},
$$
there exist functions $ \left \{\Theta_{x_{j}}\right\}, x_{j}\in
Z_{\omega},$ in the space $PW_{\omega}(X)$ such that

\begin{equation}
\widehat{f}= \sum_{x_{j}\in
Z_{\omega}}f(x_{j})\widehat{\Theta_{x_{j}}},
\end{equation}
for all $f\in PW_{\omega}(X)$.
\end{corollary}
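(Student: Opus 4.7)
The plan is to derive this corollary as an immediate consequence of the preceding theorem, which has already constructed the lattice $Z_{\omega}$ and the dual-frame functions $\Theta_{x_{j}}\in PW_{\omega}(X)$ and proved the reconstruction identity (\ref{decomp}) with convergence in $L_{2}(X)$. What remains is to transfer that identity to the transform side.

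First I would fix the constant $c(X)$ and take the $(r,N)$-lattice $Z_{\omega}=Z(r,N)$ with $r=c(X)(\omega^{2}+\|\rho\|^{2})^{-1/2}$ exactly as in the previous theorem, keeping the same family $\{\Theta_{x_{j}}\}$. Next I would apply the Helgason-Fourier transform $\mathcal{H}$ to both sides of (\ref{decomp}). By the Plancherel theorem recorded in Section 2, $\mathcal{H}$ is an isometric isomorphism from $L_{2}(X,dx)$ onto $L_{2}(\textbf{a}^{*}_{+}\times\mathcal{B},|c(\lambda)|^{-2}d\lambda db)$; in particular it is linear and bounded, so it commutes with the $L_{2}(X)$-convergent series, yielding
$$
\widehat{f}=\mathcal{H}\!\left(\sum_{x_{j}\in Z_{\omega}}f(x_{j})\Theta_{x_{j}}\right)=\sum_{x_{j}\in Z_{\omega}}f(x_{j})\widehat{\Theta_{x_{j}}},
$$
with convergence in the norm of $L_{2}(\mathcal{R},d\mu)$. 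Since every $\widehat{\Theta_{x_{j}}}$ is supported in $\Pi_{\omega}$, the series actually lives in $\Lambda_{\omega}$.

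I do not anticipate any genuine obstacle here. The frame machinery of the previous theorem already supplies both the $L_{2}(X)$-convergence of the expansion and the membership $\Theta_{x_{j}}\in PW_{\omega}(X)$, while the identification of the sampling coefficients $f(x_{j})=\langle\delta_{x_{j}},f\rangle=\langle k^{\omega}_{x_{j}},\widehat{f}\rangle_{\Lambda_{\omega}}$ is exactly the content of the final display in that proof. The only conceptual point worth stating explicitly is the commutation of $\mathcal{H}$ with an $L_{2}(X)$-convergent sum, which is immediate from the continuity just recalled, and that is all the corollary requires.
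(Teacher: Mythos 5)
Your proposal is correct and takes essentially the same route as the paper: the corollary is just the reconstruction formula (\ref{Frecon}) from the proof of Theorem 3.4 together with the identification $\left<k^{\omega}_{x_{j}},\widehat{f}\right>_{\Lambda_{\omega}}=f(x_{j})$ established there, and your step of applying the continuous operator $\mathcal{H}$ to the $L_{2}(X)$-convergent series (\ref{decomp}) simply reverses the step by which the paper obtained (\ref{decomp}) from (\ref{Frecon}) via $\mathcal{H}^{-1}$. There is no gap.
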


The formula (\ref{decomp}) can be used to obtain the following
quadrature rule for functions from $PW_{\omega}(X), \omega>0.$

\begin{corollary}
There exists a constant $ c(X)$ such that for any given
$\omega>0$, for every $(r,N)$-lattice  $Z_{\omega}=Z(r, N)$ with
$$
r=c(X)(\omega^{2}+\|\rho\|^{2})^{-1/2},
$$
and for every compact $U\subset X$ there exist a set of numbers
$\{w_{j}\}$  such that

\begin{equation}
\int_{U}fdx= \sum_{x_{j}\in Z_{\omega}}f(x_{j})w_{x_{j}},
\end{equation}
for all $f\in PW_{\omega}(X)$. The weights $w_{x_{j}}$ are given
by the formulas
$$
w_{x_{j}}=\int_{U}\Theta_{x_{j}}dx,
$$
where $\Theta_{x_{j}}$ are the functions from (\ref{decomp}).
\end{corollary}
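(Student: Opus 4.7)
The plan is to deduce the quadrature formula directly from the reconstruction formula~(\ref{decomp}) by pairing it against the characteristic function $\chi_{U}$ of the compact set $U$. First I would observe that since $U\subset X$ is compact it has finite $G$-invariant measure, so $\chi_{U}\in L_{2}(X)$, and consequently for any $f\in L_{2}(X)$ one has
$$
\int_{U}f\,dx=\langle \chi_{U},f\rangle_{L_{2}(X)}.
$$
I would then take $f\in PW_{\omega}(X)$ and apply the previous theorem, which for the $(r,N)$-lattice $Z_{\omega}$ with $r=c(X)(\omega^{2}+\|\rho\|^{2})^{-1/2}$ produces the dual frame $\{\Theta_{x_{j}}\}\subset PW_{\omega}(X)$ satisfying $f=\sum_{x_{j}\in Z_{\omega}}f(x_{j})\Theta_{x_{j}}$ with convergence in $L_{2}(X)$.

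Next, using continuity of the inner product $\langle \chi_{U},\cdot\rangle_{L_{2}(X)}$ in the second argument, I would pass the pairing inside the $L_{2}$-convergent series to obtain
$$
\int_{U}f\,dx=\langle\chi_{U},f\rangle_{L_{2}(X)}=\sum_{x_{j}\in Z_{\omega}}f(x_{j})\langle\chi_{U},\Theta_{x_{j}}\rangle_{L_{2}(X)}=\sum_{x_{j}\in Z_{\omega}}f(x_{j})w_{x_{j}},
$$
where the weights are defined by $w_{x_{j}}=\int_{U}\Theta_{x_{j}}\,dx=\langle\chi_{U},\Theta_{x_{j}}\rangle_{L_{2}(X)}$. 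This is precisely the claimed quadrature identity and the claimed formula for the weights.

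The only real point requiring care is the interchange of integration over $U$ with the infinite sum, but this is immediate from the $L_{2}$-convergence of~(\ref{decomp}) together with $\chi_{U}\in L_{2}(X)$ via Cauchy--Schwarz; no further Bernstein-type estimate is needed here because everything has already been set up in the preceding theorem. Thus the corollary reduces to a one-line application of the reconstruction formula, and no new constants beyond the universal $c(X)$ inherited from that theorem enter the argument.
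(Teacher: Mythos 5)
Your proposal is correct and follows exactly the route the paper intends: the corollary is stated as an immediate consequence of the reconstruction formula (\ref{decomp}), obtained by pairing the $L_{2}$-convergent expansion $f=\sum_{x_{j}}f(x_{j})\Theta_{x_{j}}$ against $\chi_{U}\in L_{2}(X)$ and using continuity of the inner product. No further comment is needed.
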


As another  consequence we obtain  the following quadrature
formula on the Fourier transform-side.

 \begin{corollary}
  For any measurable set
$V\subset \Pi_{\omega}$ there exist weights
$$
\upsilon_{x_{j}}=\int_{V}\widehat{\Theta_{x_{j}}}(\lambda,b)|c(\lambda)|^{-2}d\lambda
db
$$
such that
$$
\int_{V}\hat{f}(\lambda,b)|c(\lambda)|^{-2}d\lambda
db=\sum_{x_{j}\in Z_{\omega}}f(x_{j})\upsilon_{x_{j}}
$$
for all $f\in PW_{\omega}(X)$.
\end{corollary}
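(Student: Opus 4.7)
The plan is to pair the Discrete Helgason--Fourier identity of Corollary 3.1 with the indicator function $\chi_{V}$ in the Hilbert space $\Lambda_{\omega}=L_{2}(\Pi_{\omega};|c(\lambda)|^{-2}d\lambda db)$. Since the prescribed weights $\upsilon_{x_{j}}$ are by definition the $\Lambda_{\omega}$-inner products of $\widehat{\Theta_{x_{j}}}$ with $\chi_{V}$, the quadrature identity should fall out of the continuity of the inner product, provided we know that $\chi_{V}$ itself lies in $\Lambda_{\omega}$.

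Concretely, I would proceed as follows. First, I would verify that $\chi_{V}\in \Lambda_{\omega}$: the set $\Pi_{\omega}$ is bounded in $\lambda$, and $|c(\lambda)|^{-2}$ grows only polynomially on $\textbf{a}^{*}$, so the measure $|c(\lambda)|^{-2}d\lambda\, db$ restricted to $\Pi_{\omega}$ is finite and $\chi_{V}\in L_{2}(\Pi_{\omega};|c(\lambda)|^{-2}d\lambda\, db)$. Second, invoking Corollary 3.1, for every $f\in PW_{\omega}(X)$ the expansion
\[
\widehat{f}=\sum_{x_{j}\in Z_{\omega}}f(x_{j})\widehat{\Theta_{x_{j}}}
\]
converges in $\Lambda_{\omega}$. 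Third, taking the $\Lambda_{\omega}$-inner product against $\chi_{V}$ and using continuity of the inner product yields
\[
\int_{V}\hat{f}(\lambda,b)|c(\lambda)|^{-2}d\lambda\, db
=\left\langle \widehat{f},\chi_{V}\right\rangle_{\Lambda_{\omega}}
=\sum_{x_{j}\in Z_{\omega}}f(x_{j})\left\langle \widehat{\Theta_{x_{j}}},\chi_{V}\right\rangle_{\Lambda_{\omega}},
\]
and each inner product on the right is exactly $\upsilon_{x_{j}}$ by the stated formula.

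The argument is essentially mechanical once Corollary 3.1 is in hand; the only step that requires any verification is the inclusion $\chi_{V}\in \Lambda_{\omega}$, which reduces to local integrability of $|c(\lambda)|^{-2}$ on bounded sets. This should be the main (and really the only) obstacle, but it is entirely standard for Harish-Chandra's $c$-function and so the corollary should follow without further technical work.
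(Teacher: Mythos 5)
Your proposal is correct and matches the paper's (implicit) argument: the paper presents this corollary as an immediate consequence of the expansion $\widehat{f}=\sum_{x_{j}\in Z_{\omega}}f(x_{j})\widehat{\Theta_{x_{j}}}$ from Corollary 3.1, obtained by integrating that identity over $V$, which is exactly your pairing with $\chi_{V}$ in $\Lambda_{\omega}$. Your additional check that $\chi_{V}\in\Lambda_{\omega}$ (finiteness of the measure $|c(\lambda)|^{-2}d\lambda\,db$ on the bounded set $\Pi_{\omega}$) is the right justification for interchanging the sum and the integral, and it holds for the standard reasons you cite.
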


\section{A Direct  Approximation Theorem for
 quasi-normed linear spaces}
\noindent
The goal of the section is to establish certain  connections
between interpolation spaces and approximation spaces which will
be used later to develop an approximation theory on a symmetric
space $X$. The general theory of interpolation spaces can be found
in \cite{BL}, \cite{PS}, \cite{KPS}, \cite{T3}. The notion of
approximation spaces and their relations to interpolations spaces
can be found in \cite{BL}, Ch. 3 and 7, and in \cite{PS}. As it
was explained in the Introduction we use the language of
quasi-normed linear spaces  not because we want to achieve a
bigger generality but because this langauge allows to treat
simultaneously interpolation and approximation spaces.

Let $E$ be a linear space. A quasi-norm $\|\cdot\|_{E}$ on $E$ is
a real-valued function on $E$ such that for any $f,f_{1}, f_{2}\in
E$ the following holds true

\begin{enumerate}
\item $\|f\|_{E}\geq 0;$

\item $\|f\|_{E}=0  \Longleftrightarrow   f=0;$

\item $\|-f\|_{E}=\|f\|_{E};$

\item $\|f_{1}+f_{2}\|_{E}\leq C_{E}(\|f_{1}\|_{E}+\|f_{2}\|_{E}),
C_{E}>1.$

\end{enumerate}

We say that two quasi-normed linear spaces $E$ and $F$ form a
pair, if they are linear subspaces of a linear space $\mathcal{A}$
and the conditions $\|f_{k}-g\|_{E}\rightarrow 0,$ and
$\|f_{k}-h\|_{F}\rightarrow 0, f_{k}, g, h \in \mathcal{A}, $
imply equality $g=h$. For a such pair $E,F$ one can construct a
new quasi-normed linear space $E\bigcap F$ with quasi-norm
$$
\|f\|_{E\bigcap F}=\max\left(\|f\|_{E},\|f\|_{F}\right)
$$
and another one $E+F$ with the quasi-norm
$$
\|f\|_{E+ F}=\inf_{f=f_{0}+f_{1},f_{0}\in E, f_{1}\in
F}\left(\|f_{0}\|_{E}+\|f_{1}\|_{F}\right).
$$

All quasi-normed spaces $H$ for which $E\bigcap F\subset H \subset
E+F$ are called intermediate between $E$ and $F$. A group
homomorphism $T: E\rightarrow F$ is called bounded if
$$
\|T\|=\sup_{f\in E,f\neq 0}\|Tf\|_{F}/\|f\|_{E}<\infty.
$$
One says that an intermediate quasi-normed linear space $H$
interpolates between $E$ and $F$ if every bounded homomorphism $T:
E+F\rightarrow E+F$ which is a bounded homomorphism of $E$ into
$E$ and a bounded homomorphism of $F$ into $F$ is also a bounded
homomorphism of $H$ into $H$.

On $E+F$ one considers the so-called Peetere's $K$-functional
\begin{equation}
K(f, t)=K(f, t,E, F)=\inf_{f=f_{0}+f_{1},f_{0}\in E, f_{1}\in
F}\left(\|f_{0}\|_{E}+t\|f_{1}\|_{F}\right).\label{K}
\end{equation}
The quasi-normed linear space $(E,F)^{K}_{\theta,q}, 0<\theta<1,
0<q\leq \infty,$ or $0\leq\theta\leq 1,  q= \infty,$ is introduced
as a set of elements $f$ in $E+F$ for which
\begin{equation}
\|f\|_{\theta,q}=\left(\int_{0}^{\infty}
\left(t^{-\theta}K(f,t)\right)^{q}\frac{dt}{t}\right)^{1/q}.\label{Knorm}
\end{equation}

It turns out that $(E,F)^{K}_{\theta,q}, 0<\theta<1, 0\leq q\leq
\infty,$ or $0\leq\theta\leq 1,  q= \infty,$ with the quasi-norm
(\ref{Knorm})  interpolates between $E$ and $F$. The following
Reiteration Theorem is one of the main results of the theory (see
\cite{BL}, \cite{PS}, \cite{KPS}, \cite{T3}).
\begin{theorem}
Suppose that $E_{0}, E_{1}$ are complete intermediate quasi-normed
linear spaces for the pair $E,F$.  If $E_{i}\in
\mathcal{K}(\theta_{i}, E, F)$ which means
$$
K(f,t,E,F)\leq Ct^{\theta_{i}}\|f\|_{E_{i}}, i=0,1,
$$
where $ 0\leq \theta_{i}\leq 1, \theta_{0}\neq\theta_{1},$ then
$$
(E_{0},E_{1})^{K}_{\eta,q}\subset (E,F)^{K}_{\theta,q},
$$
where $0<q<\infty, 0<\eta<1,
\theta=(1-\eta)\theta_{0}+\eta\theta_{1}$.

 If for the same pair
$E,F$ and the same $E_{0}, E_{1}$  one has $E_{i}\in
\mathcal{J}(\theta_{i}, E, F)$  that means
$$
\|f\|_{E_{i}}\leq C\|f\|_{E}^{1-\theta_{i}}\|f\|_{F}^{\theta_{i}},
i=0,1,
$$
where $ 0\leq \theta_{i}\leq 1, \theta_{0}\neq\theta_{1},$ then
$$
(E,F)^{K}_{\theta,q},\subset (E_{0},E_{1})^{K}_{\eta,q},
$$
where $0<q<\infty, 0<\eta<1,
\theta=(1-\eta)\theta_{0}+\eta\theta_{1}$.

\end{theorem}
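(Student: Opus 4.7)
The plan is to treat the two inclusions separately. Although the statements are formally dual, the $K$-version reduces to an essentially calculational argument, whereas the $J$-version requires passage through the equivalent $J$-method representation of $(E,F)^K_{\theta,q}$.

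For the first inclusion, assume without loss of generality that $\theta_0 < \theta_1$. Starting from an arbitrary decomposition $f = g_0 + g_1$ with $g_i \in E_i$, the sub-additivity of the $K$-functional in $E+F$ together with the hypothesis $E_i \in \mathcal{K}(\theta_i, E, F)$ yields
\begin{equation*}
K(f, t, E, F) \leq K(g_0, t, E, F) + K(g_1, t, E, F) \leq C\bigl(t^{\theta_0}\|g_0\|_{E_0} + t^{\theta_1}\|g_1\|_{E_1}\bigr).
\end{equation*}
Factoring $t^{\theta_0}$ and taking the infimum over admissible decompositions of $f$ in the pair $(E_0, E_1)$ gives the pointwise estimate
\begin{equation*}
K(f, t, E, F) \leq C\, t^{\theta_0}\, K\bigl(f, t^{\theta_1-\theta_0}, E_0, E_1\bigr).
\end{equation*}
Substituting this into the definition of $\|f\|_{(E,F)^K_{\theta,q}}$, performing the change of variable $s = t^{\theta_1-\theta_0}$ (so $dt/t = (\theta_1-\theta_0)^{-1}\,ds/s$), and using $\theta - \theta_0 = \eta(\theta_1-\theta_0)$ to see that the weight $t^{-\theta+\theta_0}$ becomes $s^{-\eta}$, the integral collapses to $(\theta_1-\theta_0)^{-1}\int_0^\infty (s^{-\eta} K(f,s,E_0,E_1))^q\, ds/s$, which is exactly the $(E_0,E_1)^K_{\eta,q}$ quasi-norm up to a constant.

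For the second inclusion, the strategy is to exploit the equivalence of the $K$- and $J$-methods on the couple $(E,F)$: every $f \in (E,F)^K_{\theta,q}$ admits a discrete representation $f = \sum_{k\in\mathbb{Z}} u_k$ with $u_k \in E \cap F$ and
\begin{equation*}
\Bigl(\sum_{k\in\mathbb{Z}} \bigl(2^{-k\theta} J(2^k, u_k)\bigr)^{q}\Bigr)^{1/q} \leq C\,\|f\|_{(E,F)^K_{\theta,q}},
\end{equation*}
where $J(t,u) = \max(\|u\|_E, t\|u\|_F)$. The assumption $E_i \in \mathcal{J}(\theta_i, E, F)$ then bounds each $\|u_k\|_{E_i}$ by $C\, 2^{-k\theta_i} J(2^k, u_k)$. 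For a fixed $s>0$, one splits $f = \sum_{k\leq k(s)} u_k + \sum_{k > k(s)} u_k$ at a level $k(s)$ chosen so that $2^{k(s)(\theta_1-\theta_0)} \asymp s$; the first piece is absorbed into $E_0$ and the second into $E_1$, yielding
\begin{equation*}
K(f, s, E_0, E_1) \leq C\sum_{k\leq k(s)} 2^{-k\theta_0} J(2^k,u_k) + C\,s \sum_{k>k(s)} 2^{-k\theta_1} J(2^k,u_k).
\end{equation*}
Inserting this into the $(E_0,E_1)^K_{\eta,q}$ norm and applying discrete Hardy-type inequalities to each of the two resulting sums brings the final bound back to the $J$-sum above, completing the embedding.

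The genuine obstacle lies in the second part: invoking the equivalence of the $J$- and $K$-functional representations (the Fundamental Lemma of interpolation), and arranging the splitting index $k(s)$ so that the relation $\theta = (1-\eta)\theta_0 + \eta\theta_1$ matches the exponents produced by the Hardy inequalities. The first inclusion, by contrast, is just sub-additivity of the $K$-functional followed by a single change of variable, so essentially all of the work is on the $\mathcal{J}$-side.
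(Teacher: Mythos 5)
The paper does not actually prove this statement: it is quoted as the known Reiteration Theorem with references to \cite{BL}, \cite{PS}, \cite{KPS}, \cite{T3}, so there is no in-paper argument to compare against, and your proposal has to be judged against the standard literature proof --- which it essentially reproduces. The first inclusion is complete and correct as you present it: quasi-subadditivity of $K(\cdot,t,E,F)$ over a decomposition $f=g_0+g_1$ with $g_i\in E_i$, the hypothesis $E_i\in\mathcal{K}(\theta_i,E,F)$, the infimum giving $K(f,t,E,F)\leq Ct^{\theta_0}K(f,t^{\theta_1-\theta_0},E_0,E_1)$, and the substitution $s=t^{\theta_1-\theta_0}$ together with $\theta-\theta_0=\eta(\theta_1-\theta_0)$ is exactly the standard calculation (the only point worth recording is that the subadditivity constant is the quasi-norm constant of $E+F$, which is harmless since it appears once). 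The exponent bookkeeping in the second half also closes up: $\theta_0+\eta(\theta_1-\theta_0)=\theta$ for the $k\leq k(s)$ block and $\theta_1-(1-\eta)(\theta_1-\theta_0)=\theta$ for the $k>k(s)$ block, so the two Hardy inequalities both land on $\sum_k\bigl(2^{-k\theta}J(2^k,u_k)\bigr)^q$.

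The genuine gap is in the step $\bigl\|\sum_{k\leq k(s)}u_k\bigr\|_{E_0}\leq C\sum_{k\leq k(s)}\|u_k\|_{E_0}$ (and its twin for $E_1$). For a quasi-norm with constant $C_{E_0}>1$ the triangle inequality iterates with geometrically growing constants, and countable subadditivity is simply false; as written, the infinite sums are not even known to converge in $E_0$ or $E_1$. This is precisely where the hypothesis that $E_0,E_1$ are \emph{complete} must enter, and your proposal never uses it. The standard repair is to pass to an equivalent $p$-norm via the Aoki--Rolewicz theorem, so that $\|\sum_k v_k\|^p\leq\sum_k\|v_k\|^p$ for some $0<p\leq 1$; completeness then gives convergence of the two tails, and the discrete Hardy inequalities have to be run in their $\ell^p$ form (which changes constants but not exponents). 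The same device is needed inside the fundamental lemma itself when the couple $(E,F)$ is only quasi-normed --- this is, in fact, the reason the paper insists on the quasi-normed language, since the Power Theorem it invokes later produces genuinely non-normable spaces $(E)^{\rho}$. With that insertion your argument is the proof given in the cited sources; without it, the second inclusion does not go through for quasi-normed $E_0,E_1$.
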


It is important to note that in all cases which will be considered
in the present article the space $F$ will be continuously embedded
as a subspace into $E$. In this case (\ref{K}) can be introduced
by the formula
$$
K(f, t)=\inf_{f_{1}\in
F}\left(\|f-f_{1}\|_{E}+t\|f_{1}\|_{F}\right),
$$
which implies the inequality
\begin{equation}
K(f, t)\leq \|f\|_{E}.
\end{equation}
This inequality can be used to show  that the norm (\ref{Knorm})
is equivalent to the norm
\begin{equation}
\|f\|_{\theta,q}=\|f\|_{E}+\left(\int_{0}^{\varepsilon}
\left(t^{-\theta}K(f, t)\right)^{q}\frac{dt}{t}\right)^{1/q},
\varepsilon>0,
\end{equation}
for any positive $\varepsilon$.

Let us introduce another functional on $E+F$, where $E$ and $F$
form a pair of quasi-normed linear spaces
$$
\mathcal{E}(f, t)=\mathcal{E}(f, t,E, F)=\inf_{g\in F,
\|g\|_{F}\leq t}\|f-g\|_{E}.
$$

\begin{definition}The approximation space $\mathcal{E}_{\alpha,q}(E, F),
0<\alpha<\infty, 0<q\leq \infty $ is a quasi-normed linear spaces
of all $f\in E+F$ with the following quasi-norm
\begin{equation}
\left(\int_{0}^{\infty}\left(t^{\alpha}\mathcal{E}(f,
t)\right)^{q}\frac{dt}{t}\right)^{1/q}.
\end{equation}
\end{definition}

For a general quasi-normed linear spaces  $E$ the notation
$(E)^{\rho}$ is used for a quasi-normed linear spaces whose
quasi-norm is $\|\cdot\|^{\rho}$.

The following Theorem describes relations between interpolation
and approximation spaces (see  \cite{BL}, Ch. 7).
\begin{theorem}
If $\theta=1/(\alpha+1)$ and $r=\theta q, $ then
$$
(\mathcal{E}_{\alpha,q}(E, F))^{\theta}=(E,F)^{K}_{\theta,q}.
$$
\end{theorem}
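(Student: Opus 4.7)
The plan is to reduce the identification to a Legendre-type duality between the $K$-functional and the $\mathcal{E}$-functional, followed by a dyadic change of variables. The key identity I would establish first is
\begin{equation*}
K(f,t) = \inf_{s>0}\bigl(\mathcal{E}(f,s) + ts\bigr), \qquad t>0.
\end{equation*}
For the ``$\le$'' direction, fix $s$, pick $g \in F$ with $\|g\|_F \le s$ and $\|f-g\|_E$ arbitrarily close to $\mathcal{E}(f,s)$, and test $K(f,t)$ with the decomposition $f = (f-g) + g$ to get $K(f,t)\le\mathcal{E}(f,s)+ts$. For the reverse, given any decomposition $f = f_0 + f_1$, choose $s = \|f_1\|_F$: then $\mathcal{E}(f,s)\le\|f_0\|_E$ by the very definition of $\mathcal{E}$, so $\|f_0\|_E + t\|f_1\|_F \ge \mathcal{E}(f,s) + ts$, and taking the infimum over decompositions gives the matching bound.

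With this duality formula I would discretize both quasi-norms dyadically. Setting $a_k = \mathcal{E}(f, 2^k)$, monotonicity of $\mathcal{E}(f,\cdot)$ gives $\|f\|_{\mathcal{E}_{\alpha,q}}^q \asymp \sum_k (2^{k\alpha} a_k)^q$. The duality formula at the dyadic scale $t = 2^{-j/\theta}$ reads
\begin{equation*}
K(f, 2^{-j/\theta}) \asymp \inf_{k \in \mathbb{Z}} \bigl(a_k + 2^{k - j/\theta}\bigr),
\end{equation*}
and since $(a_k)$ is non-increasing while $(2^{k-j/\theta})$ is increasing in $k$, the infimum is attained (up to a constant) at the crossover index $k(j)$ where $a_{k(j)} \asymp 2^{k(j) - j/\theta}$. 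Combining this pointwise comparison with the change of variable $t = 2^{-j/\theta}$, the relation $\theta(\alpha+1)=1$, and the identification $r = \theta q$, the interpolation-space quasi-norm reshapes into the same weighted $\ell^q$ sum that appears in the approximation-space quasi-norm after raising to the power $\theta$.

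To finish, a standard Hardy-type inequality for monotone sequences turns the pointwise comparison $K(f, 2^{-j/\theta}) \asymp a_{k(j)}$ into the full equivalence of the two weighted $\ell^q$ sums, and converting these back into integrals is routine via the usual comparison of $\int (\cdot)^q \, dt/t$ with its dyadic midpoint sum. The main obstacle is the exponent bookkeeping: the power $\theta$ on the approximation-space side and the change of second index from $q$ to $r = \theta q$ on the interpolation side must be matched exactly, and this matching is forced by the identity $\theta(\alpha+1) = 1$, which is also precisely what makes $s = t^{-\theta}$ a conformal change of the multiplicative Haar measures $ds/s$ and $dt/t$. Keeping track of these power and measure normalizations, together with the fact that the Legendre duality holds pointwise for the possibly non-smooth function $\mathcal{E}(f,\cdot)$, constitutes the technical heart of the argument.
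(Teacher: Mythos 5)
First, on the comparison you were asked to survive: the paper does not prove this statement at all --- it is quoted from Bergh--L\"ofstr\"om, Ch.~7 (and Peetre--Sparr) --- so your attempt can only be judged on its own merits. Your overall route is the standard one for this equivalence: the Legendre-type identity $K(f,t)=\inf_{s>0}\bigl(\mathcal{E}(f,s)+ts\bigr)$ is a correct and exact identity for the functionals as defined in Section~4 (your two-sided verification is fine and needs no quasi-triangle inequality), and dyadic discretization plus a crossover argument is indeed how the result is proved in the cited source.

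The genuine gap is exactly the ``exponent bookkeeping'' that you defer to the end: with the duality formula you wrote, it does not close. At the crossover $\mathcal{E}(f,s)=ts$ your identity gives $K(f,t)\asymp ts$, hence
\begin{equation*}
t^{-\theta}K(f,t)\asymp t^{1-\theta}s=\mathcal{E}(f,s)^{1-\theta}s^{\theta},
\end{equation*}
whereas the quantity you must produce, $\bigl(s^{\alpha}\mathcal{E}(f,s)\bigr)^{\theta}$ with $\theta=1/(\alpha+1)$, equals $\mathcal{E}(f,s)^{\theta}s^{1-\theta}$; the ratio of the two is $(\mathcal{E}(f,s)/s)^{1-2\theta}=t^{1-2\theta}$, an unbounded factor unless $\alpha=1$. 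The clean matching $t^{-\theta}K\asymp\bigl(s^{\alpha}\mathcal{E}(f,s)\bigr)^{\theta}$ holds for the $K$-functional of the \emph{reversed} couple, $K(f,t,F,E)=\inf_{s}\bigl(s+t\,\mathcal{E}(f,s)\bigr)$, and then the second indices relate by $q=\theta r$, i.e.\ $r=q(\alpha+1)$, not $r=\theta q$. This corrected form is in fact the one invoked at the end of the proof of Theorem~4.4, where the identification used is $\mathcal{E}_{\theta\beta,q}(E,\mathcal{P})=\bigl((\mathcal{P},E)^{K}_{1/(1+\theta\beta),\,q(1+\theta\beta)}\bigr)^{1+\theta\beta}$: reversed couple, and interpolation index $q(1+\theta\beta)=q/\theta'$. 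So you should either prove the statement with the couple reversed in the interpolation space (equivalently, keep $(E,F)$ and replace $\theta$ by $\alpha/(\alpha+1)$) and with $q=\theta r$; as written, a faithful execution of your own plan would reveal that the two weighted sums cannot be made equal. A secondary, fixable gap: passing from the pointwise crossover comparison to the equivalence of the two weighted $\ell^{q}$ sums is not a one-line Hardy inequality, because the crossover map $j\mapsto k(j)$ is in general neither injective nor surjective (jumps and flat stretches of the non-increasing function $\mathcal{E}(f,\cdot)$ must be handled separately); this is the content of the ``almost inverse function'' lemma in Bergh--L\"ofstr\"om and needs to be written out.
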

The following important result is known as the Power Theorem (see
\cite{BL}, Ch. 7).
\begin{theorem}
Suppose that the following relations satisfied:  $\nu=\eta
\rho_{1}/\rho,$  $\rho=(1-\eta)\rho_{0}+\eta \rho_{1}, $ and
$q=\rho r$ for $ \rho_{0}>0, \rho_{1}>0.$ Then, if $0<\eta< 1, 0<
r\leq \infty,$ the following equality holds true
$$
\left((E)^{\rho_{0}}, (F)^{\rho_{1}}\right)^{K}_{\eta,
r}=\left((E,F)^{K}_{\nu, q}\right)^{\rho}.
$$
\end{theorem}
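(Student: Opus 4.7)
The goal is the equivalence of quasi-norms
\[
\|f\|_{((E)^{\rho_{0}},(F)^{\rho_{1}})^{K}_{\eta,r}}\;\asymp\;\|f\|^{\rho}_{(E,F)^{K}_{\nu,q}},
\]
which, after raising to the $r$th power and using $q=\rho r$, reduces to showing
\[
\int_{0}^{\infty}\bigl(t^{-\eta}K(f,t,(E)^{\rho_{0}},(F)^{\rho_{1}})\bigr)^{r}\frac{dt}{t}\;\asymp\;\int_{0}^{\infty}\bigl(s^{-\nu}K(f,s,E,F)\bigr)^{q}\frac{ds}{s}.
\]
I would prove this in two stages: (i) a pointwise-in-parameter comparison of the two $K$-functionals along a suitable coupling $t=t(s)$, and (ii) a change of variable in the left-hand integral. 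The numerical relations $\rho=(1-\eta)\rho_{0}+\eta\rho_{1}$ and $\nu=\eta\rho_{1}/\rho$ are precisely the algebraic conditions that force the resulting exponents to match.

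For stage (i), I would unfold the two infima
\[
K(f,t,(E)^{\rho_{0}},(F)^{\rho_{1}})=\inf_{f=f_{0}+f_{1}}\bigl(\|f_{0}\|_{E}^{\rho_{0}}+t\|f_{1}\|_{F}^{\rho_{1}}\bigr),\quad K(f,s,E,F)=\inf_{f=f_{0}+f_{1}}\bigl(\|f_{0}\|_{E}+s\|f_{1}\|_{F}\bigr).
\]
At a near-optimal decomposition for the second infimum the two summands balance, so $\|f_{0}\|_{E}\asymp K(f,s,E,F)\asymp s\|f_{1}\|_{F}$. Substituting into the first infimum and balancing its two summands in turn forces the coupling $t\asymp s^{\rho_{1}}K(f,s,E,F)^{\rho_{0}-\rho_{1}}$ and yields $K(f,t,(E)^{\rho_{0}},(F)^{\rho_{1}})\asymp K(f,s,E,F)^{\rho_{0}}$. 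For stage (ii), under this coupling the integrand on the left-hand side becomes
\[
\bigl(t^{-\eta}K(f,s,E,F)^{\rho_{0}}\bigr)^{r}=s^{-\eta r\rho_{1}}K(f,s,E,F)^{r(\rho_{0}-\eta(\rho_{0}-\rho_{1}))}=s^{-\nu q}K(f,s,E,F)^{q},
\]
using the identities $\rho_{0}-\eta(\rho_{0}-\rho_{1})=\rho$, $\rho r=q$, and $\eta\rho_{1}r=\nu q$. Concavity of $s\mapsto K(f,s,E,F)$ gives $d\log K(f,s,E,F)/d\log s\in[0,1]$, hence the Jacobian $dt/t=[\rho_{1}+(\rho_{0}-\rho_{1})\,d\log K/d\log s]\,ds/s$ is comparable to $ds/s$, and the two integrals are equivalent.

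\textbf{Main obstacle.} The technical heart is the pointwise $K$-functional comparison in stage (i); the rest is algebraic bookkeeping. Turning $t=s^{\rho_{1}}K(f,s,E,F)^{\rho_{0}-\rho_{1}}$ into a genuine (essentially bi-Lipschitz in logarithmic coordinates) change of variable relies on the monotonicity and concavity of $K(f,\cdot)$, which are standard properties. The boundary cases $r=\infty$ or $q=\infty$ are handled by replacing the integrals with suprema, with no new difficulty.
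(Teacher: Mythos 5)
The paper does not actually prove this statement: it is the Power Theorem, quoted with the reference \cite{BL}, Ch.~7, and no argument is given in the text. So the only thing to assess is your sketch on its own merits. Your route is the standard one from Bergh--L\"ofstr\"om: couple the two $K$-functionals by $t=s^{\rho_{1}}K(f,s,E,F)^{\rho_{0}-\rho_{1}}$, check $K(f,t,(E)^{\rho_{0}},(F)^{\rho_{1}})\asymp K(f,s,E,F)^{\rho_{0}}$, and change variables using $0\le d\log K/d\log s\le 1$; your exponent bookkeeping in stage (ii) is correct, and the observation that $d\log t/d\log s\in[\min(\rho_{0},\rho_{1}),\max(\rho_{0},\rho_{1})]$ makes the substitution legitimate and shows $t(s)$ sweeps out all of $(0,\infty)$.

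One step needs repair, and it is the one you yourself call the technical heart. The claim that a near-optimal decomposition for $K(f,s,E,F)$ \emph{balances}, i.e. $\|f_{0}\|_{E}\asymp s\|f_{1}\|_{F}\asymp K(f,s,E,F)$, is false in general: for $f\in F$ and $s$ small the infimum is essentially attained at $f_{0}=0$, and for $f\in E$ and $s$ large at $f_{1}=0$, so one of the two terms can vanish. Fortunately balancing is not needed. For the upper bound you only use the one-sided inequalities $\|f_{0}\|_{E}\le(1+\varepsilon)K(f,s)$ and $s\|f_{1}\|_{F}\le(1+\varepsilon)K(f,s)$, valid for any near-optimal decomposition, which give $K_{\infty}(f,t,(E)^{\rho_{0}},(F)^{\rho_{1}})\le \max\bigl(\|f_{0}\|_{E}^{\rho_{0}},\,t\|f_{1}\|_{F}^{\rho_{1}}\bigr)\le CK(f,s)^{\rho_{0}}$ under your coupling. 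For the lower bound, every decomposition satisfies $\max(\|f_{0}\|_{E},s\|f_{1}\|_{F})\ge \frac{1}{2}K(f,s)$; whichever term realizes this maximum yields, after raising to the power $\rho_{0}$ (resp.\ $\rho_{1}$, multiplied by $t=s^{\rho_{1}}K(f,s)^{\rho_{0}-\rho_{1}}$), a quantity bounded below by a constant times $K(f,s)^{\rho_{0}}$. With stage (i) justified this way (and the usual equivalence $K_{\infty}\le K\le 2K_{\infty}$, plus suprema in place of integrals when $r=\infty$), your argument is complete.
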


The Theorem we prove next represents a very general version of a
Jackson-type Theorem (see \cite{KP}, \cite{P4}, \cite{P5}).  Such
type results are known as Direct Approximation Theorems.

\begin{theorem}
 Suppose that $\mathcal{P}\subset F\subset E$ are quasi-normed
linear spaces and $E$ and $F$ are complete. Suppose that there
exist $C>0$ and $\beta >0$ such that for any $f\in F$ the
following Jackson-type inequality verified
\begin{equation}
t^{\beta}\mathcal{E}(t,f,\mathcal{P},E)\leq C\|f\|_{F},\label{dir}
t>0,
\end{equation}
 then the following embedding holds true
\begin{equation}
(E,F)^{K}_{\theta,q}\subset \mathcal{E}_{\theta\beta,q}(E,
\mathcal{P}), 0<\theta<1, 1<q\leq \infty.
\end{equation}

\end{theorem}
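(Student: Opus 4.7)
The plan is to relate the approximation functional $\mathcal{E}(f,u,E,\mathcal{P})$ to Peetre's $K$-functional $K(f,s,E,F)$ pointwise, using the Jackson-type hypothesis, and then convert the integral expression defining $(E,F)^K_{\theta,q}$ into the integral defining $\mathcal{E}_{\theta\beta,q}(E,\mathcal{P})$ by a single substitution.

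First, given any $f\in E+F$, take an arbitrary decomposition $f=f_0+f_1$ with $f_0\in E$, $f_1\in F$. For any $g\in\mathcal{P}$ with $\|g\|_{\mathcal{P}}\le u$, the quasi-triangle inequality in $E$ gives
\begin{equation*}
\|f-g\|_E \le C_E\bigl(\|f_0\|_E+\|f_1-g\|_E\bigr).
\end{equation*}
Taking the infimum over such $g$ and applying the hypothesis (\ref{dir}) to $f_1\in F$ yields
\begin{equation*}
\mathcal{E}(f,u,E,\mathcal{P}) \le C_E\bigl(\|f_0\|_E+\mathcal{E}(f_1,u,E,\mathcal{P})\bigr)\le C_E\bigl(\|f_0\|_E+C\,u^{-\beta}\|f_1\|_F\bigr).
\end{equation*}
Since the decomposition was arbitrary, passing to the infimum on the right gives the pointwise bound
\begin{equation*}
\mathcal{E}(f,u,E,\mathcal{P}) \le C_E\, K\!\left(f, Cu^{-\beta},E,F\right), \qquad u>0.
\end{equation*}

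Next I plug this pointwise estimate into the defining quasi-norm of $\mathcal{E}_{\theta\beta,q}(E,\mathcal{P})$ and perform the change of variables $s=Cu^{-\beta}$, so that $ds/s=-\beta\,du/u$ and $u^{\theta\beta}=(C/s)^{\theta}$. This converts
\begin{equation*}
\left(\int_0^{\infty}\!\!\bigl(u^{\theta\beta}\mathcal{E}(f,u,E,\mathcal{P})\bigr)^{q}\frac{du}{u}\right)^{1/q}
\end{equation*}
into a constant multiple of
\begin{equation*}
\left(\int_0^{\infty}\!\!\bigl(s^{-\theta}K(f,s,E,F)\bigr)^{q}\frac{ds}{s}\right)^{1/q}=\|f\|_{(E,F)^K_{\theta,q}},
\end{equation*}
with an explicit constant of the form $C_E\beta^{-1/q}C^{\theta}$. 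The same manipulation, with $\sup$ in place of the integral, handles $q=\infty$. This establishes the claimed embedding.

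The main subtlety, rather than an obstacle, is keeping careful track of the quasi-norm constant $C_E$ (which is why working in the quasi-normed rather than normed setting does not cost us anything beyond one extra multiplicative factor), and verifying that the substitution maps $(0,\infty)$ onto $(0,\infty)$ so that no boundary terms appear; the restriction $0<\theta<1$ is exactly what guarantees convergence of the resulting integrals at both endpoints. No deeper machinery (such as the Reiteration or Power Theorem) is needed for this direct inclusion.
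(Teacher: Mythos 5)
Your proof is correct, and it takes a genuinely different and more elementary route than the paper. The paper never compares $\mathcal{E}$ and $K$ pointwise in the direction you do; instead it first converts the Jackson inequality into the statement that $(F)^{1/(1+\beta)}$ is of class $\mathcal{K}\bigl(\tfrac{1}{1+\beta},\mathcal{P},E\bigr)$ (via a known inequality from \cite{BL}, Ch.~7 relating $K_{\infty}(f,s,\mathcal{P},E)$ to $\liminf_{\tau\to t-0}\mathcal{E}(f,\tau,E,\mathcal{P})$, which yields $K(f,s,\mathcal{P},E)\leq Cs^{\frac{1}{1+\beta}}\|f\|_{F}^{\frac{1}{1+\beta}}$), and then invokes the Reiteration Theorem, the Power Theorem, and the identification $(\mathcal{E}_{\alpha,q}(E,\mathcal{P}))^{\theta}=(\mathcal{P},E)^{K}_{\theta,q}$ to land in the approximation space. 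Your argument --- split $f=f_{0}+f_{1}$, approximate only the $F$-part by $\mathcal{P}$ using the hypothesis, take the infimum over decompositions to get $\mathcal{E}(f,u,E,\mathcal{P})\leq C_{E}K(f,Cu^{-\beta},E,F)$, then substitute $s=Cu^{-\beta}$ --- bypasses all of that machinery, gives an explicit constant $C_{E}\beta^{-1/q}C^{\theta}$, and in fact works for the full range $0<q\leq\infty$ rather than only $1<q\leq\infty$. What the paper's route buys in exchange is the intermediate class inclusions $(F)^{\frac{1}{1+\beta}}\in\mathcal{K}(\tfrac{1}{1+\beta},\mathcal{P},E)$ and $E\in\mathcal{K}(1,\mathcal{P},E)$, which are reusable within the interpolation framework (e.g.\ toward a matching inverse theorem); it also illustrates why the author insists on quasi-normed spaces, since the Power Theorem genuinely requires them, whereas in your proof the quasi-norm constant $C_{E}$ appears only as a harmless multiplicative factor. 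One small remark: the convergence of your final integral is automatic from the hypothesis $f\in(E,F)^{K}_{\theta,q}$ after the substitution, so the condition $0<\theta<1$ is needed only for the interpolation space itself to be meaningful, not as an extra convergence check.
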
\label{intthm}

\begin{proof}

It is known (\cite{BL}, Ch.7)  that for any $s>0$, for
\begin{equation}
t=K_{\infty}(f,
s)=K_{\infty}(f,s,\mathcal{P},E)=\inf_{f=f_{1}+f_{2}, f_{1}\in
\mathcal{P}, f_{2}\in E}\max(\|f_{1}\|_{\mathcal{P}},
s\|f_{2}\|_{E})\label{t}
\end{equation}
the following inequality holds
\begin{equation}
s^{-1}K_{\infty}(f, s)\leq \lim_{\tau\rightarrow
t-0}\inf\mathcal{E}(f, \tau,E,\mathcal{P})\label{lim}.
\end{equation}
Since
\begin{equation}
K_{\infty}(f,s)\leq K(f, s)\leq 2K_{\infty}(f, s),\label{equiv}
\end{equation}
the Jackson-type inequality (\ref{dir}) and the inequality
 (\ref{lim}) imply
\begin{equation}
s^{-1}K(f,s,\mathcal{P},E)\leq Ct^{-\beta}\|f\|_{F}.
\end{equation}
The equality (\ref{t}), and inequality (\ref{equiv}) imply the
estimate
\begin{equation}
t^{-\beta}\leq 2^{\beta}
\left(K(f,s,\mathcal{P},E)\right)^{-\beta}
\end{equation}
which along with the previous inequality gives the estimate
$$
K^{1+\beta}(f,s,\mathcal{P},E)\leq C s \|f\|_{F}
$$
which in turn imply the inequality
\begin{equation}
K(f,s,\mathcal{P},E)\leq C s^{\frac{1}{1+\beta}}
\|f\|_{F}^{\frac{1}{1+\beta}}.\label{class1}
\end{equation}
At the same time one has
\begin{equation}
K(f, s,\mathcal{P},E)= \inf_{f=f_{0}+f_{1},f_{0}\in \mathcal{P},
f_{1}\in E}\left(\|f_{0}\|_{\mathcal{P}}+s\|f_{1}\|_{E}\right)\leq
s\|f\|_{E},\label{class2}
\end{equation}
for every $f$ in $E$. The inequality (\ref{class1}) means that the
quasi-normed linear space $(F)^{\frac{1}{1+\beta}}$ belongs to the
class $\mathcal{K}(\frac{1}{1+\beta}, \mathcal{P}, E)$ and
(\ref{class2}) means that the quasi-normed linear space $E$
belongs to the class $\mathcal{K}(1, \mathcal{P}, E)$. This fact
allows to use the Reiteration Theorem to obtain the embedding
\begin{equation}
\left((F)^{\frac{1}{1+\beta}},E\right)^{K}_{\frac{1-\theta}{1+\theta
\beta},q(1+\theta \beta)}\subset \left(\mathcal{P},
E\right)^{K}_{\frac{1}{1+\theta \beta},q(1+\theta \beta)}
\end{equation}
for every $0<\theta<1, 1<q<\infty$. But the space on the left is
the space
$$
\left(E,(F)^{\frac{1}{1+\beta}}\right)^{K}_{\frac{\theta(1+\beta)}{1+\theta
\beta},q(1+\theta \beta)},
$$
which is according to the Power Theorem is the space
$$
\left((E,F)^{K}_{\theta, q}\right)^{\frac{1}{1+\theta \beta}}.
$$
All these results along with the equivalence of  interpolation and
approximation spaces give  the embedding
$$
\left(E,F\right)^{K}_{\theta,q}\subset \left(\left(\mathcal{P},
E\right)^{K}_{\frac{1}{1+\theta \beta},q(1+\theta
\beta)}\right)^{1+\theta \beta}=\mathcal{E}_{\theta \beta,
q}(E,\mathcal{P}),
$$
which proves  the Theorem.

\end{proof}

\section{Approximation by Paley-Wiener functions on noncompact symmetric spaces}
\noindent
The Helgason-Fourier transform can be treated as a unitary
operator from the space $L_{2}(X)$ onto the space
$L_{2}\left(\textbf{a}_{+}^{*}, |c(\lambda)|^{-2}d\lambda \right)$
of abstract-valued functions
$$
\widehat{f}(\lambda,\cdot):\textbf{a}_{+}^{*}\rightarrow
L_{2}(\mathcal{B},db).
$$
Define the support of $\widehat{f}(\lambda,\cdot)$ as the
complement of the maximal open set $\mathcal{U}\subset
\textbf{a}^{*}$ such that $\widehat{f}(\lambda,\cdot)=0$ for
almost all $\lambda\in \mathcal{U}$.

 Consider the space
$$
PW(X)=\bigcup_{t>0}PW_{t}(X),
$$
which is  a quasi-normed linear space with respect to the
quasi-norm
\begin{equation}
\|f\|_{PW(X)}=\sup\left\{\|\lambda\|^{2}=\left<\lambda,\lambda\right>:\lambda\in
supp \hat{f}(\lambda,\cdot)\right\},
\end{equation}
where $\left<\cdot,\cdot\right>$ is the Killing form on
$\textbf{a}^{*}$.

The corresponding approximation functional takes the form
$$
\mathcal{E}(f, t)=\mathcal{E}(f,t,PW(X), L_{2}(X))=\inf_{g\in
PW_{t}(X)}\|f-g\|, f\in L_{2}(X).
$$

The next goal is to introduce  Besov spaces on $X$  in terms of
the group $e^{is\sqrt{-\Delta}}$ generated in $L_{2}(X)$ by the
operator $\sqrt{-\Delta}$. Consider a difference operator of order
$r\in \mathbb{N}$ as
$$
\left(I-e^{is\sqrt{-\Delta}}\right)^{r}f=
(-1)^{r+1}\sum^{r}_{k=0}(-1)^{k-1}C^{k}_{r}e^{iks\sqrt{-\Delta}}f,
f\in L_{2}(X).
$$
and the modulus of continuity which is defined as
$$
\Omega_{r}(f,s)=\sup_{\tau\leq s}\left\|\left(I-e^{i\tau
\sqrt{-\Delta}}\right)^{r}f\right\|.
$$

The Besov space $\mathbf{B}_{2,q}^{\alpha}(X), 1\leq q\leq \infty,
\alpha>0, $ as an  interpolation space between $L_{2}(X)$ and
Sobolev space $H^{r}(X)$,
\begin{equation}
\mathbf{B}_{2,q}^{\alpha}(X)=\left(L_{2}(X),
H^{r}(X)\right)_{\alpha/r,q}^{K},\label{interpol}
\end{equation}
where $r\in \mathbb{N}, 0<\alpha<r, 1\leq q<\infty,$ or $0\leq
\alpha\leq r, q=\infty.$

The fact that the Sobolev space $H^{r}(X)$ is the domain of a
self-adjoint operator $(-\Delta)^{r/2}$ implies (see \cite{BL},
\cite{BB}, \cite{KPS}, \cite{T3}) that this definition
(\ref{interpol}) is independent on $r$ and it is the reason that
$r$ does not appear on the left side of the last formula.
Furthermore the Besov norms can be given by the
following formulas (see \cite{BL}, \cite{BB},  \cite{KPS},
\cite{T3})
\begin{equation}
 \|f\|_{\mathbf{B}_{2,q}^{\alpha}(X)}=\|f\|+
 \left(\int_{0}^{\infty}\left(s^{-\alpha}\Omega_{r}(f,s)\right)^{q}
\frac{ds}{s} \right)^{1/q},\label{BN1}
\end{equation}
where $ 0<\alpha<r, 1\leq q<\infty$ or
\begin{equation}
\|f\|_{\mathbf{B}_{2,\infty}^{\alpha}(X)}=\|f\|+
\sup_{0<s<\infty}\left(s^{-\alpha}\Omega_{r}(f,s)\right),\label{BN2}
\end{equation}
where $0\leq \alpha\leq r, q= \infty$. Note that many other
descriptions of Besov spaces on complete manifolds and symmetric
spaces were given in \cite{S1}- \cite{S3} and \cite{T1}-
\cite{T3}.

Our goal is to prove the following Theorem which gives description
of Besov spaces in terms of the best approximations by
Paley-Wiener functions.
\begin{theorem}
The following embedding holds true
\begin{equation}
\mathbf{B}_{2,q}^{\alpha}(X)\subset \mathcal{E}_{\alpha,q}(PW(X),
L_{2}(X)),\label{T3.5}
\end{equation}
where $   0<\alpha<\infty,  1\leq q\leq\infty . $ In other words
there exists a constant $C(X)$ such that for all $f\in
\mathbf{B}_{2,q}^{\alpha}(X)$ the following inequality holds
\begin{equation}
\left(\int_{0}^{\infty}\left(s^{\alpha}\mathcal{E}(f,
s)\right)^{q}\frac{ds}{s}\right)^{1/q}\leq C(X)\left(\|f\|+
 \left(\int_{0}^{\infty}\left(s^{-\alpha}\Omega_{r}(f,s)\right)^{q}
\frac{ds}{s} \right)^{1/q}\right),
\end{equation}
for any $0<\alpha<r\in \mathbb{N}$ if $1\leq q<\infty$ and any
$0<\alpha\leq r\in \mathbb{N}$ if $1\leq q\leq \infty$.
\end{theorem}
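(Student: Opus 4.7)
\medskip

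\noindent\emph{Proof proposal.} The plan is to deduce the theorem by specializing the abstract Direct Approximation Theorem 4.4 to the chain of quasi-normed linear spaces
\[
\mathcal{P}=PW(X)\ \subset\ F=H^{r}(X)\ \subset\ E=L_{2}(X),
\]
where $r\in\mathbb{N}$ is chosen with $r>\alpha$. Both $L_{2}(X)$ and $H^{r}(X)$ are Hilbert, hence complete quasi-normed, and $PW(X)$ has been equipped with the quasi-norm introduced at the start of Section~5. Once the hypotheses of Theorem~4.4 are verified with an appropriate exponent $\beta$, its conclusion $(E,F)^{K}_{\theta,q}\subset\mathcal{E}_{\theta\beta,q}(E,\mathcal{P})$ will deliver (\ref{T3.5}) after the identification $\mathbf{B}^{\alpha}_{2,q}(X)=(L_{2}(X),H^{r}(X))^{K}_{\alpha/r,q}$ given in~(\ref{interpol}).

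The first step is a Jackson-type inequality. For $f\in H^{r}(X)$ let $f_{\omega}=\mathcal{H}^{-1}(\chi_{\omega}\widehat{f})$ be its orthogonal projection onto $PW_{\omega}(X)$. By the Plancherel formula together with the spectral identity $\widehat{\Delta f}=-(\|\lambda\|^{2}+\|\rho\|^{2})\widehat{f}$,
\[
\|f-f_{\omega}\|^{2}=\int_{\|\lambda\|>\omega}|\widehat{f}(\lambda,b)|^{2}|c(\lambda)|^{-2}\,d\lambda\,db\ \le\ \omega^{-2r}\,\|(-\Delta)^{r/2}f\|^{2},
\]
because $(\|\lambda\|^{2}+\|\rho\|^{2})^{r}\ge\omega^{2r}$ on the tail $\{\|\lambda\|>\omega\}$. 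Using the equivalence of the Sobolev norm (\ref{SN}) with the graph norm of $(-\Delta)^{r/2}$ recalled in Section~2, one concludes
\[
\mathcal{E}(f,\omega,PW(X),L_{2}(X))\ \le\ \|f-f_{\omega}\|\ \le\ C\,\omega^{-r}\,\|f\|_{H^{r}(X)},
\]
which is exactly the Jackson-type inequality (\ref{dir}) of Theorem~4.4 with $\beta=r$.

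The second step is to apply Theorem~4.4 with $\theta=\alpha/r\in(0,1)$, obtaining
\[
\mathbf{B}^{\alpha}_{2,q}(X)=(L_{2}(X),H^{r}(X))^{K}_{\alpha/r,q}\ \subset\ \mathcal{E}_{\alpha,q}(L_{2}(X),PW(X)),
\]
which is the claimed embedding. To pass from the $K$-functional form of the Besov quasi-norm to the explicit expression in (\ref{BN1})/(\ref{BN2}) involving the modulus of continuity $\Omega_{r}(f,s)$, I would invoke the equivalence between $K(f,t,L_{2}(X),H^{r}(X))$ and $\Omega_{r}(f,t^{1/r})$, which is standard for Besov scales associated with a self-adjoint positive operator (here $\sqrt{-\Delta}$) and is recorded in the references cited right after~(\ref{BN2}). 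The cases $0<\alpha<r$, $1\le q<\infty$ and $0<\alpha\le r$, $q=\infty$ then follow by the same argument, interpreting integrals as essential suprema in the second case.

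The main obstacle I expect is not the analytic content---which reduces to a one-line Plancherel estimate combined with the abstract machinery of Section~4---but the bookkeeping of the various normalizations: one has to make sure that the quasi-norm on $PW(X)$ used to define the approximation functional $\mathcal{E}(\cdot,t,PW(X),L_{2}(X))$ is calibrated so that the Jackson exponent matches $\beta=r$, and hence that the composition $\theta\beta$ produced by the reiteration and power theorems lands on $\alpha$ rather than on $\alpha/2$ or $2\alpha$. After this is settled the argument is entirely a deduction from Theorem~4.4.
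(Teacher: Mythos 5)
Your proposal is correct and follows essentially the same route as the paper: verify the Jackson-type inequality $\mathcal{E}(f,t,PW(X),L_{2}(X))\leq t^{-r}\|f\|_{H^{r}(X)}$ via Plancherel (the optimal approximant being the spectral truncation $\mathcal{H}^{-1}(\chi_{t}\widehat{f})$, exactly as in the paper's estimate (\ref{estofba})), then apply Theorem 4.4 with $\beta=r$, $\theta=\alpha/r$ and the interpolation description (\ref{interpol}) of $\mathbf{B}^{\alpha}_{2,q}(X)$, with the explicit $\Omega_{r}$-form of the norm supplied by (\ref{BN1})--(\ref{BN2}). The calibration issue you flag (whether the parameter $t$ in the approximation functional is the band limit $\omega$ or the quasi-norm $\omega^{2}$) is real and is glossed over in the paper as well; it is worth noting that you identified it.
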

\begin{proof}
We have to prove  the following embedding
\begin{equation}
(L_{2}(X),H^{r}(X))^{K}_{\alpha/r,q}\subset
\mathcal{E}_{\alpha,q}( PW(X), L_{2}(X)),
\end{equation}
where $  \alpha<r\in \mathbb{N}, 1<q\leq \infty. $
 In order to be able to apply our Theorem 4.4
  we are going to verify the Jackson-type inequality (\ref{dir}).

   Let $\chi_{t}$ be the
characteristic function of the set $\Pi_{t}$. According to the
Plancherel Theorem
\begin{equation}
\mathcal{E}(f, t,PW(X), L_{2}(X))= \inf_{g\in PW_{t}(X)}\|f-g\|_{
L_{2}(X)}=
$$
$$
\inf_{g\in
PW_{t}(X)}\|\hat{f}-\hat{g}\|_{L_{2}(\textbf{a}^{*}\times\mathcal{B},d\mu)},\label{inf}
\end{equation}
where $d\mu=|c(\lambda)|^{-2}d\lambda db$.  But it is obvious that
the $\inf$ in the last formula is achieved exactly when
$\hat{g}=\chi_{t} \hat{f}$. Since the Sobolev space $H^{r}(X)$ is
the domain of
 $(-\Delta)^{r/2}$ we obtain the following inequalities for every  $f\in H^{r}(X)$
\begin{equation}
\mathcal{E}(f, t,PW(X), L_{2}(X))=
$$
$$
\left(\int_{\Pi_{t}}
  (\|\lambda\|^{2}+\|\rho\|^{2})
^{-r}(\|\lambda\|^{2}+\|\rho\|^{2})
^{r}|\hat{f}(\lambda,b)|^{2}|c(\lambda)|^{-2}d\lambda
db\right)^{1/2}\leq
$$
$$
(t^{2}+\|\rho\|^{2}) ^{-r/2}\|f\|_{r}\leq t^{-r}\|f\|_{r}, r\in
\mathbb{N},\label{estofba}
\end{equation}
that shows that the Jackson-type inequality (\ref{dir}) is
satisfied and $\beta=r$. Thus, by Theorem 4.4 and (\ref{interpol}) we obtain
(\ref{T3.5}) and the claim follows.
\end{proof}

As a consequence of this result in the case $q=\infty$ we obtain
the following Corollary.
\begin{corollary}
If for a function $f\in L_{2}(X)$ the following relation hods
\begin{equation}
\Omega_{r}(f,s)= O(s^{\alpha}),  s \rightarrow 0.
\end{equation}
for an $0<\alpha\leq r\in \mathbb{N},$ then the following
relations holds
\begin{equation}
\mathcal{E}(f, s,PW(X), L_{2}(X))= O(s^{-\alpha}), s\rightarrow
\infty.
\end{equation}

\end{corollary}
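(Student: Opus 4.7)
The plan is to derive this corollary directly from Theorem~5.1 applied in the endpoint case $q=\infty$. The reduction has two ingredients: first, recognizing that the hypothesis on $\Omega_{r}(f,s)$ places $f$ in the Besov space $\mathbf{B}^{\alpha}_{2,\infty}(X)$ as characterized by~(\ref{BN2}); second, unpacking what the resulting embedding of Theorem~5.1 says when $q=\infty$.

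First I would verify that $s^{-\alpha}\Omega_{r}(f,s)$ is bounded on all of $(0,\infty)$. Near zero this is exactly the assumed $O(s^{\alpha})$ decay. For $s$ bounded away from zero I would invoke the unitarity of $e^{i\tau\sqrt{-\Delta}}$ on $L_{2}(X)$, which gives $\|I-e^{i\tau\sqrt{-\Delta}}\|_{L_{2}\to L_{2}}\leq 2$ and hence the uniform crude estimate $\Omega_{r}(f,s)\leq 2^{r}\|f\|$. Consequently $s^{-\alpha}\Omega_{r}(f,s)\to 0$ as $s\to\infty$. Combining the two regimes yields $\sup_{0<s<\infty}s^{-\alpha}\Omega_{r}(f,s)<\infty$, and from~(\ref{BN2}) one concludes that $f\in\mathbf{B}^{\alpha}_{2,\infty}(X)$.

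With this in hand, Theorem~5.1 specialized to $q=\infty$ (interpreting the integral as the usual supremum) yields
$$\sup_{s>0}s^{\alpha}\mathcal{E}(f,s,PW(X),L_{2}(X))\leq C(X)\|f\|_{\mathbf{B}^{\alpha}_{2,\infty}(X)}<\infty,$$
which is exactly the decay statement $\mathcal{E}(f,s,PW(X),L_{2}(X))=O(s^{-\alpha})$ as $s\to\infty$ claimed by the corollary. The substantive analytic content, namely the Jackson-type estimate~(\ref{estofba}) and the interpolation/approximation equivalence, has already been absorbed into Theorem~5.1, so no genuine obstacle remains beyond bookkeeping; the only slightly delicate point is the passage to the $q=\infty$ endpoint of the Besov scale, and even that is standard once one observes that the global $L_{2}$-boundedness of the group $e^{is\sqrt{-\Delta}}$ supplies the needed tail control at infinity.
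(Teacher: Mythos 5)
Your proposal is correct and follows exactly the paper's route: the paper obtains this corollary simply by invoking Theorem~5.1 in the endpoint case $q=\infty$, and your argument fills in the routine verification that the hypothesis $\Omega_{r}(f,s)=O(s^{\alpha})$ as $s\to 0$, together with the crude bound $\Omega_{r}(f,s)\leq 2^{r}\|f\|$ coming from the unitarity of $e^{i\tau\sqrt{-\Delta}}$, places $f$ in $\mathbf{B}^{\alpha}_{2,\infty}(X)$ via the norm (\ref{BN2}).
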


Let $c(X)$ will be the same constant as in Theorem 3.4. We
introduce the functional
\begin{equation}
\Phi(f; Z_{\omega})=\left\|\widehat{f}-\sum_{x_{j}\in
Z_{\omega}}f_{\omega}(x_{j})\widehat{\Theta_{x_{j}}}\right\|_{L_{2}(\textbf{a}^{*}\times\mathcal{B},d\mu)},\omega>0,\label{func}
\end{equation}
where $Z_{\omega}=Z(r,N)$ is any lattice with
$r=c(X)(\omega^{2}+\|\rho\|^{2})^{-1/2}$ and
$\left\{\widehat{\Theta_{x_{j}}}\right\}$ is the corresponding
frame in the space $
\Lambda_{\omega}=L_{2}\left(\Pi_{\omega};|c(\lambda)|^{-2}d\lambda
db\right) $.

 The following Theorem is a consequence
of (\ref{Error}), (\ref{BA}), and  Theorem 5.1.

\begin{theorem}There exists a constant $C_{0}(X)$ and for any $\omega>0$
there exists a sequence of $(r,N)$-lattices $Z_{\omega}=Z(r,N)$
 with $r=c(X)(\omega^{2}+\|\rho\|^{2})^{-1/2}$ such that for any
$f\in \mathbf{B}_{2,q}^{\alpha}(X),   0<\alpha<\infty,  1\leq
q\leq\infty $ the following inequality holds
\begin{equation}
\left(\int_{0}^{\infty}\left(\omega^{\alpha}
    \Phi(f; Z_{\omega})  \right)^{q}\frac{d\omega}{\omega}\right)^{1/q}\leq
C_{0}(X)\|f\|_{\mathbf{B}_{2,q}^{\alpha}(X)},
\end{equation}
where functional $ \Phi(f; Z_{\omega})$ is defined in
(\ref{func}).
\end{theorem}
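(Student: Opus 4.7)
The plan is to derive Theorem 5.2 by chaining together the pointwise-in-$\omega$ estimate (\ref{Error}) from the introduction with the approximation-theoretic characterization of Besov spaces from Theorem 5.1. Since both ingredients are already in place, the argument reduces to bookkeeping.

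First, for each fixed $\omega>0$ I would select the lattice $Z_\omega=Z(r,N)$ with $r=c(X)(\omega^{2}+\|\rho\|^{2})^{-1/2}$ guaranteed by Theorem 3.4 and Corollary 3.1, together with the associated frame $\{\widehat{\Theta_{x_j}}\}$ in $\Lambda_\omega$. Corollary 3.1 applied to the band-limited projection $f_\omega\in PW_\omega(X)$ gives the exact identity $\widehat{f_\omega}=\sum_{x_j\in Z_\omega}f_\omega(x_j)\widehat{\Theta_{x_j}}$. Inserting this into the definition (\ref{func}) of $\Phi(f;Z_\omega)$, applying the triangle inequality in $L_2(\mathcal{R},d\mu)$, and using Plancherel together with $\widehat{f_\omega}=\chi_\omega\widehat{f}$ exactly as in (\ref{Error}), I obtain the pointwise bound
$$
\Phi(f;Z_\omega)\leq \|f-f_\omega\|_{L_2(X)}=\mathcal{E}(f,\omega,PW(X),L_2(X)).
$$

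Next, I would raise this to the $q$th power, multiply by $\omega^{\alpha q-1}$, and integrate in $\omega$ over $(0,\infty)$ (replacing the integral by the essential supremum when $q=\infty$). Monotonicity of the resulting integral operator in the integrand converts the pointwise estimate into
$$
\left(\int_{0}^{\infty}\left(\omega^{\alpha}\Phi(f;Z_\omega)\right)^{q}\frac{d\omega}{\omega}\right)^{1/q}\leq \left(\int_{0}^{\infty}\left(\omega^{\alpha}\mathcal{E}(f,\omega)\right)^{q}\frac{d\omega}{\omega}\right)^{1/q}.
$$
Finally, Theorem 5.1 bounds the right-hand side by $C(X)\|f\|_{\mathbf{B}_{2,q}^{\alpha}(X)}$, so setting $C_0(X)=C(X)$ completes the proof.

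There is essentially no independent obstacle: the entire content sits in the earlier results. The one point that needs to be monitored is that the constant $c(X)$ governing the lattice density in Theorem 3.4 is independent of $\omega$, so the same frame construction, and hence the same reconstruction identity for $f_\omega$, is available at every scale. Once that uniformity is noted, the present statement is a direct consequence of (\ref{Error}) and Theorem 5.1.
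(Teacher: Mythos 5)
Your proposal is correct and follows exactly the route the paper itself indicates: the paper states Theorem 5.2 as a consequence of the pointwise estimate (\ref{Error}), the identity (\ref{BA}) equating $\|f-f_{\omega}\|$ with the best approximation $\mathcal{E}(f,\omega)$, and Theorem 5.1, which is precisely the chain you spell out. Your added remark about the $\omega$-uniformity of the constant $c(X)$ is a reasonable point to monitor but introduces nothing beyond what Theorem 3.4 already provides.
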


 Since the Sobolev space
$H^{s}(X)$ is the domain of the self-adjoint operator
$(-\Delta)^{s/2}$ in the Hilbert space $L_{2}(X)$
 the general theory of interpolation spaces \cite{KPS}, \cite{T3},
 implies the isomorphism
$$
\mathbf{B}_{2,2}^{\alpha}(X)=\left(L_{2}(X),H^{r}(X)\right)_{\alpha/r,
2}^{K}=H^{\alpha}(X).
 $$
  Using this fact
 we obtain the Theorem 1.1 as a
consequence of  Theorem 5.2 and Corollary 5.1.

\section{Acknowledgment}

I would like to thank the anonymous referee for constructive
suggestions.
\bibliographystyle{amsalpha}

\end{document}